\documentclass{amsart}
\linespread{1.15}

\usepackage{amsmath, amssymb, mathrsfs, verbatim, multirow}

\theoremstyle{definition}
\newtheorem{Teo}{Theorem}[section]
\newtheorem{Def}[Teo]{Definition}
\newtheorem{Prop}[Teo]{Proposition}
\newtheorem{Obs}[Teo]{Remark}
\newtheorem{Lema}[Teo]{Lemma}
\newtheorem{Cor}[Teo]{Corollary}

\newtheorem{Exa}[Teo]{Example}
\newtheorem{Claim}[Teo]{Claim}

\newcommand{\R}{\mathbb{R}}
\newcommand{\Z}{\mathbb{Z}}
\newcommand{\N}{\mathbb{N}}
\newcommand{\C}{\mathbb{C}}
\newcommand{\p}{\mathbb{P}}

\newcommand{\Llr}{\Longleftrightarrow}
\newcommand{\lra}{\longrightarrow}

\newcommand{\VR}{\mathcal{O}}
\newcommand{\PI}{\mathfrak{p}}
\newcommand{\MI}{\mathfrak{m}}

\begin{document}
\title[Infima and Topologies]{Valuations centered at a two-dimensional regular local ring: Infima and Topologies}

\date{}

\author{Josnei Novacoski}
\address{Department of Mathematics and Statistics,
University of Saskatchewan,
Saskatoon, \newline \indent
SK S7N 5E6, Canada}
\email{jan328@mail.usask.ca}

\keywords{Valuative tree, non-metric tree, valuations centered at a local ring}
\subjclass[2010]{Primary 13A18 Secondary 13H05, 14H20}

\begin{abstract} 
The aim of this paper is to prove that every non-empty set of valuations centered at a two-dimensional regular domain has an infimum. We also generalize some results related to a non-metric tree. 
\end{abstract}

\maketitle

\section{Introduction}

Favre and Jonsson prove in \cite{Fav_1} that the set of normalized valuations centered at $\C[[x,y]]$ has a tree structure. In \cite{Gra} Granja generalizes this result for the set of normalized valuations centered at any two-dimensional regular local ring. In both works, the definition of rooted non-metric tree is not satisfactory (see discussion after definition \ref{Def_2}). That is because the definition given in the cited papers does not guarantee the existence of infimum of a non-empty set of valuations. The existence of the infimum (of two valuations) is necessary in order to define some important concepts, such as the weak tree topology (see definition \ref{Def_1} item \textbf{(iv)}) and the metric associated to a parametrization (definition \ref{Def_1} item \textbf{(vi)}).

In \cite{Fav_1} it is stated that the existence of an infimum is a consequence of the given definition, which is not the case (see example \ref{Exa_1}). In order to make the theory developed there consistent, one needs to prove that there exists an infimum for any given pair of valuations. In the case of $R=\C[[x,y]]$ it is proved in \cite{Fav_1}, using the sequence of key polynomials associated to a valuation, that the infimum of two valuations exists as long as we can find an element which ``minimizes" both valuations. An easy argument (Corollary \ref{Cor_1}) shows that one always can get such an element (in fact, we can get it for any two-dimensional regular ring).

An interesting question which arises naturally is whether we can find an infimum of any non-empty set of valuations centered at a two-dimensional regular local ring. One of the purposes of this paper is to give a positive answer for this question.

\begin{Teo}\label{Main}
Let $R$ be a two-dimensional regular local ring and take any non-empty $\mathcal S=\{\nu_i\}_{i\in I}$ of centered valuations $\nu_i:R\lra \R_\infty$ normalized by $\nu_i(\MI)=1$ (see definition \ref{Norval}). Then there exists a valuation $\nu:R\lra \R_\infty$ which is the infimum of $\mathcal S$ with respect to the order given by $\nu\leq \mu$ if and only if $\nu(\phi)\leq \mu(\phi)$ for every $\phi\in R$.
\end{Teo}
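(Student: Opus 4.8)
The plan is to realize the infimum $\nu$ as a pointwise limit of infima of \emph{finite} subfamilies of $\mathcal S$. The substantive input, supplied earlier in the paper, is the two-valuation case: using Corollary \ref{Cor_1} to produce an element minimizing any two given valuations, one obtains (as in \cite{Fav_1}, now over an arbitrary two-dimensional regular local ring, compare \cite{Gra}) that any two centered valuations normalized by $\nu(\MI)=1$ have an infimum which is again such a valuation. Since $\inf$ is associative on finite families and the pair-infimum of centered normalized valuations is again centered and normalized, a straightforward induction gives: every finite non-empty $F\subseteq\mathcal S$ has an infimum $\nu_F$ in the poset of centered valuations normalized by $1$ at $\MI$.

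Next, consider $\mathcal T=\{\nu_F : \emptyset\neq F\subseteq\mathcal S\ \text{finite}\}$. Because $\nu_{F_1\cup F_2}\leq\nu_{F_1}$ and $\nu_{F_1\cup F_2}\leq\nu_{F_2}$, the family $\mathcal T$ is downward-directed, and each of its members lies above the $\MI$-adic (order) valuation $\nu_\MI$, which is easily seen to be $\leq$ every centered valuation with $\nu(\MI)=1$. Define
\[
\nu(\phi)=\inf\{w(\phi) : w\in\mathcal T\}=\lim_F \nu_F(\phi)\qquad(\phi\in R),
\]
the limit being along the set of finite subsets of $\mathcal S$ ordered by inclusion; for each fixed $\phi$ the net $F\mapsto\nu_F(\phi)$ is monotone decreasing and bounded below by $\nu_\MI(\phi)$, so the limit exists in $\R_\infty$.

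The heart of the argument is to verify that $\nu$ is again a centered valuation normalized by $\nu(\MI)=1$. The ultrametric inequality and the estimate $\nu\geq\nu_\MI\geq 0$ (hence $\nu>0$ on $\MI$) pass to the limit because, for monotone decreasing nets $a_F\downarrow A$ and $b_F\downarrow B$, one has $\min(a_F,b_F)\downarrow\min(A,B)$. Normalization passes too: writing $\MI=(x,y)$ for a regular system of parameters, $\nu(\MI)=\min(\nu(x),\nu(y))=\lim_F\min(\nu_F(x),\nu_F(y))=\lim_F 1=1$. The delicate point, and the main obstacle within the present argument, is multiplicativity: one must show $\nu(\phi\psi)=\nu(\phi)+\nu(\psi)$, i.e. $\lim_F(\nu_F(\phi)+\nu_F(\psi))=\lim_F\nu_F(\phi)+\lim_F\nu_F(\psi)$. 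The inequality $\geq$ is immediate; for $\leq$ one uses the \emph{directedness} of $\mathcal T$: given $\varepsilon>0$, pick $F_1,F_2$ with $\nu_{F_1}(\phi)$ and $\nu_{F_2}(\psi)$ within $\varepsilon/2$ of the respective limits, and evaluate at any finite $F\supseteq F_1\cup F_2$ — it is essential that a single $F$ works simultaneously for both factors, which is precisely what the raw pointwise infimum of $\mathcal S$ fails to provide (cf. Example \ref{Exa_1}). The case in which one of the two limits equals $\infty$ is handled separately using $\nu_F\geq\nu_\MI\geq 0$.

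Finally, $\nu$ is the infimum of $\mathcal S$. It is a lower bound, since for every $i\in I$ and $\phi\in R$ we have $\nu(\phi)\leq\nu_{\{i\}}(\phi)=\nu_i(\phi)$. Conversely, if $\mu$ is a centered valuation normalized by $1$ at $\MI$ with $\mu\leq\nu_i$ for all $i$, then $\mu$ is a lower bound of every finite $F\subseteq\mathcal S$, hence $\mu\leq\nu_F$ for all such $F$, and therefore $\mu(\phi)\leq\inf_F\nu_F(\phi)=\nu(\phi)$ for all $\phi\in R$, i.e. $\mu\leq\nu$. (The same reasoning shows, more generally, that any downward-directed family of centered valuations normalized by $1$ at $\MI$ admits an infimum, namely its pointwise infimum.)
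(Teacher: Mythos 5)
Your reduction of the general case to finite subfamilies, and then to a downward-directed net, is correct and is genuinely different from the paper's route: the paper proves only the two-valuation case in the proof of Theorem \ref{Main} and obtains arbitrary infima through the lemma that, under \textbf{(T1)} and \textbf{(T2)}, \textbf{(T4')} implies \textbf{(T4)} (using the order-isomorphism of segments with real intervals), whereas you verify directly that the pointwise infimum of a directed family of centered valuations normalized by $1$ is again such a valuation, the only delicate axiom being \textbf{(V1)}, which you correctly handle via directedness. That part of your argument buys something: it does not need the tree axioms \textbf{(T2)}--\textbf{(T3)} imported from \cite{Fav_1} and \cite{Gra}, and it in fact proves the stronger statement that any downward-directed family has its pointwise infimum as infimum.

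The genuine gap is at the start: the two-valuation case for an \emph{arbitrary} two-dimensional regular local ring is not ``supplied earlier in the paper'' and is not a consequence of Corollary \ref{Cor_1} alone. Corollary \ref{Cor_1} only produces a common element $ax+by$ with $\nu(ax+by)=\nu(\MI)$ and $\mu(ax+by)=\mu(\MI)$; combined with Corollary 3.19 of \cite{Fav_1} this yields the pairwise infimum only for $R=\C[[x,y]]$ (this is exactly the content of the Remark following that corollary), since the key-polynomial machinery of \cite{Fav_1} is specific to that ring. For general $R$, the existence of $\nu\wedge\mu$ is precisely what the paper's proof of Theorem \ref{Main} establishes, and it requires real work: one takes the sequences of quadratic dilatations, lets $s$ be the last index where $R^{(i)}_\nu=R^{(i)}_\mu$ and $m^{(i)}_\nu=m^{(i)}_\mu$, uses Corollary \ref{Cor_1} to build a monomial valuation $\omega^{(s)}$ (or $\omega^{(s+1)}$) on the common dilatation, restricts it to $R$, and invokes Granja's Theorem \ref{Lem_1} to show that no lower bound of $\{\nu,\mu\}$ lies strictly above this restriction. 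Your proposal cites this case away with ``compare \cite{Gra}'', so the heart of the theorem is assumed rather than proved. A smaller point: you take $\nu_F$ to be an infimum only ``in the poset of centered valuations normalized by $1$''; to get the infimum in the order stated in the theorem (all valuations $R\lra\R_\infty$), you should take the pairwise and finite infima in that larger poset, which is what the pairwise construction actually provides, and then your final paragraph goes through for an arbitrary valuation lower bound $\mu$, not just a normalized centered one.
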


By use of this theorem, it follows from \cite{Fav_1} and \cite{Gra} that the set of all centered normalized valuations on $R$ has a tree structure and associated to that a weak tree topology (see definitions and discussions in section \ref{Tree}). Every parametrization (see definition \ref{Def_1}) of a tree induces a metric on that tree. A natural question is whether this metric topology and the weak tree topology are comparable. The next theorem answers positively this question.

\begin{Teo}\label{Comp}
Let $(\mathcal T,\leq)$ be a rooted non-metric tree and let $\Psi:\mathcal T\lra [1,\infty]$ be a parametrization of $\mathcal T$. The weak tree topology on $\mathcal{T}$ is coarser than or equal to the topology associated with the metric $d_\Psi$.
\end{Teo}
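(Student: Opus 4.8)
The plan is to check that each member of a subbasis of the weak tree topology is open for the metric $d_\Psi$; this is enough, since then every basic, hence every open, set of the weak tree topology is $d_\Psi$-open. Recall that the weak tree topology is generated by the sets $U(\vec v)$ attached to tangent vectors $\vec v$ at points $\tau\in\mathcal T$, and that these are of two shapes: if $\vec v$ is represented by some $\sigma_0>\tau$, then $U(\vec v)=\{\rho\in\mathcal T:\sigma_0\wedge\rho>\tau\}$; if $\tau$ is not the root and $\vec v$ is the tangent vector at $\tau$ pointing toward the root, then $U(\vec v)=\{\rho\in\mathcal T:\tau\not\leq\rho\}$. So I must produce, for such a set $U$ and any $\sigma\in U$, some $\varepsilon>0$ with the open $d_\Psi$-ball $B_{d_\Psi}(\sigma,\varepsilon)$ contained in $U$.

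Two facts carry the argument. First, the three-point property of a tree: for $x,y,z\in\mathcal T$ the infima $x\wedge y$, $y\wedge z$, $x\wedge z$ are pairwise comparable, the least of them is attained at least twice, and it is $\leq$ the third — this rests on each segment $\{w:w\leq x\}$ being totally ordered. Second, a parametrization is strictly monotone and additive along each segment: if $a\leq b\leq c$ then $|\Psi(a)-\Psi(c)|=|\Psi(a)-\Psi(b)|+|\Psi(b)-\Psi(c)|$; since $\sigma\wedge\rho$ lies below both $\sigma$ and $\rho$, this yields $|\Psi(\sigma)-\Psi(\sigma\wedge\rho)|\leq d_\Psi(\sigma,\rho)$ and $|\Psi(\rho)-\Psi(\sigma\wedge\rho)|\leq d_\Psi(\sigma,\rho)$. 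When $\Psi$ takes the value $\infty$ it does so only at an endpoint of $\mathcal T$, where the verifications below degenerate and can be read off directly; so I assume every value of $\Psi$ in sight is finite.

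For $U=\{\rho:\sigma_0\wedge\rho>\tau\}$ and $\sigma\in U$, I take $\varepsilon:=|\Psi(\sigma\wedge\sigma_0)-\Psi(\tau)|$, which is positive because $\sigma\in U$ gives $\tau<\sigma\wedge\sigma_0$. Let $d_\Psi(\sigma,\rho)<\varepsilon$ and apply the three-point property to $\sigma,\rho,\sigma_0$. If $\sigma\wedge\sigma_0\leq\sigma\wedge\rho$, then $\sigma\wedge\sigma_0$ lies below both $\sigma_0$ and $\rho$, so $\sigma_0\wedge\rho\geq\sigma\wedge\sigma_0>\tau$ and $\rho\in U$. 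If instead $\sigma\wedge\rho<\sigma\wedge\sigma_0$, the three-point property forces $\sigma_0\wedge\rho=\sigma\wedge\rho$; since $\sigma\wedge\rho<\sigma\wedge\sigma_0\leq\sigma$, the segment from $\sigma\wedge\rho$ to $\sigma$ passes through $\sigma\wedge\sigma_0$, so $d_\Psi(\sigma,\rho)\geq|\Psi(\sigma)-\Psi(\sigma\wedge\rho)|\geq|\Psi(\sigma\wedge\sigma_0)-\Psi(\sigma\wedge\rho)|$. Now $\tau\leq\sigma$, so $\tau$ and $\sigma\wedge\rho$ are comparable; were $\sigma\wedge\rho\leq\tau$, additivity of $\Psi$ along the chain $\sigma\wedge\rho\leq\tau<\sigma\wedge\sigma_0$ would give $d_\Psi(\sigma,\rho)\geq|\Psi(\sigma\wedge\sigma_0)-\Psi(\sigma\wedge\rho)|\geq|\Psi(\sigma\wedge\sigma_0)-\Psi(\tau)|=\varepsilon$, a contradiction; hence $\sigma\wedge\rho>\tau$ and $\sigma_0\wedge\rho=\sigma\wedge\rho>\tau$, i.e. $\rho\in U$. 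For $U=\{\rho:\tau\not\leq\rho\}$ and $\sigma\in U$, I take $\varepsilon:=|\Psi(\tau)-\Psi(\sigma\wedge\tau)|$, positive because $\sigma\in U$ gives $\sigma\wedge\tau<\tau$; if $d_\Psi(\sigma,\rho)<\varepsilon$ but $\tau\leq\rho$, the three-point property applied to $\sigma,\rho,\tau$ gives $\sigma\wedge\rho=\sigma\wedge\tau$, whence $d_\Psi(\sigma,\rho)\geq|\Psi(\rho)-\Psi(\sigma\wedge\tau)|\geq|\Psi(\tau)-\Psi(\sigma\wedge\tau)|=\varepsilon$, a contradiction; so $\tau\not\leq\rho$ and $\rho\in U$.

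The three-point computations and the segment-additivity of $\Psi$ are routine; the point that calls for care is bookkeeping of comparabilities in a bare poset — for instance, passing from $\sigma\wedge\rho\not\leq\tau$ to $\sigma\wedge\rho>\tau$ is legitimate only because both elements sit on the linearly ordered segment below $\sigma$. The real content — and the step I expect to be the main obstacle — is the case $\sigma\wedge\rho<\sigma\wedge\sigma_0$ above: one must recognize that the infimum $\sigma_0\wedge\rho$ can leave the cone $U$ only by collapsing all the way down to $\sigma\wedge\rho$, and then measure, via additivity of $\Psi$ along $[\sigma\wedge\rho,\sigma]$, just how far $\rho$ must move from $\sigma$ for that to occur; that is exactly what the choice of $\varepsilon$ records.
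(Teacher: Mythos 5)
Your overall strategy (reduce to subbasic sets of the weak tree topology, identify the two shapes of tangent-vector classes, and bound distances by additivity along segments) is sound, and your identification of the classes $[\sigma_0]_\tau$ as $\{\rho\mid \sigma_0\wedge\rho>\tau\}$ (for $\sigma_0>\tau$) and $\{\rho\mid\tau\not\leq\rho\}$ (for the direction of the root) is correct, though with the paper's definition via the relation $\sim_\tau$ it deserves a line of verification. The genuine problem is that you argue against the wrong metric. In this paper $d_\Psi$ is built from reciprocals:
\[
d_{\Psi}(\tau,\sigma)=\left(\frac{1}{\Psi(\tau\wedge\sigma)}-\frac{1}{\Psi(\tau)}\right)+\left(\frac{1}{\Psi(\tau\wedge\sigma)}-\frac{1}{\Psi(\sigma)}\right),
\]
not from differences of $\Psi$. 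Hence your key inequality $|\Psi(\sigma)-\Psi(\sigma\wedge\rho)|\leq d_\Psi(\sigma,\rho)$ is false (if $\Psi(\sigma\wedge\rho)=10$ and $\Psi(\sigma)=100$ the left side is $90$ while $d_\Psi(\sigma,\rho)\leq 2/10$), and the radii you choose do not give the claimed ball containment. Concretely, take a segment with $\Psi$-values $[1,12]$, let $\tau,\ \sigma\wedge\sigma_0,\ \sigma$ sit at values $10,11,12$, let $\sigma_0$ branch off at value $11$, and let $\rho$ branch off at $\tau$ with $\Psi(\rho)=10.5$: then $\sigma_0\wedge\rho=\tau$, so $\rho\notin[\sigma_0]_\tau$, yet $d_\Psi(\sigma,\rho)=\frac1{10}-\frac1{12}+\frac1{10}-\frac1{10.5}\approx 0.02$, far below your $\varepsilon=|\Psi(\sigma\wedge\sigma_0)-\Psi(\tau)|=1$. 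So the step ``$d_\Psi(\sigma,\rho)<\varepsilon$ forces $\rho\in U$'' fails as written.

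The error is mechanical rather than conceptual: setting $\widetilde\Psi:=2-\frac1\Psi$, which is again an increasing parametrization (with values in $[1,2]$, and no issues at $\Psi=\infty$), one has $d_\Psi(\sigma,\rho)=\bigl(\widetilde\Psi(\sigma)-\widetilde\Psi(\sigma\wedge\rho)\bigr)+\bigl(\widetilde\Psi(\rho)-\widetilde\Psi(\sigma\wedge\rho)\bigr)$, and your entire case analysis goes through verbatim with $\Psi$ replaced by $\widetilde\Psi$ (so $\varepsilon=\frac1{\Psi(\tau)}-\frac1{\Psi(\sigma\wedge\sigma_0)}$ in the first case and $\varepsilon=\frac1{\Psi(\sigma\wedge\tau)}-\frac1{\Psi(\tau)}$ in the second). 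After this correction your argument is essentially the paper's: the paper takes $\gamma\in[\sigma]_\tau$, radius $\epsilon=d_\Psi(\gamma,\tau)$, and uses the two claims that $\alpha\notin[\sigma]_\tau$ iff $\tau\in[\alpha,\sigma]$ and that $d_\Psi$ is additive over a point of the segment, which packages your two cases and your comparability bookkeeping into one computation; your version, once repaired, is a slightly more explicit case-by-case variant of the same idea, not a different route.
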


In \cite{Fav_1}, two parametrizations (skewness and thinness) are presented to prove that the normalized centered valuations on $\C[[x,y]]$ form a parametrized non-metric tree. Also, in \cite{Gra}, Granja presents a new approach and a different parametrization that proves the same result for any two-dimensional regular local ring. In \cite{Fav_1}, Favre and Jonsson compare the topologies generated by their parametrizations and the weak tree topology. Our theorem above gives a more general comparison, which does not depend on the valuative origin of such tree.

We also show (Theorem \ref{Teo_1}) that if the tree has a point with uncountably many branches, then these topologies are different.

\section{Preliminaries}

\begin{Def}
Take a commutative ring $R$ with unity. A \textbf{valuation} on $R$ is a non-constant mapping $\nu:R\lra \Gamma_\infty :=\Gamma \cup\{\infty\}$ where $\Gamma$ is an ordered abelian group (and the extension of addition and ordering to $\infty$ is as usual), with the following properties:
\begin{description}
\item[(V1)] $\nu(\phi\psi)=\nu(\phi)+\nu(\psi)$ for all $\phi,\psi\in R$.
\item[(V2)] $\nu(\phi+\psi)\geq \min\{\nu(\phi),\nu(\psi)\}$ for all $\phi,\psi\in R$.
\item[(V3)] $\nu(1)=0$ and $\nu(0)=\infty$.
\end{description}
\end{Def}

We denote by $\widetilde{\mathcal{W}}$ the class of all valuations $\nu$ on $R$ such that there exists $\phi\in R$ with $\nu(\phi)\neq 0$ and $\nu(\phi)\neq \infty$.

\begin{Def}
A valuation $\nu:R\lra \Gamma_\infty$ is a \textbf{Krull valuation} if $\nu^{-1}(\infty)=\{0\}$.
\end{Def}

If $R$ admits a Krull valuation $\nu$ then $R$ is a domain and we can extend $\nu$ to a valuation on the field $K=Quot(R)$ by defining $\nu\displaystyle\left(\frac{\phi}{\psi}\right)=\nu\phi-\nu\psi$. The class of all Krull valuations on $R$ (or $K$) will be denoted by $\widetilde{\mathcal{V}}$. By definition $\widetilde{\mathcal{V}}\subseteq \widetilde{\mathcal{W}}$.

Let $\nu: R \lra\Gamma_\infty$ be a valuation. The subgroup of $\Gamma$ generated by
\[
\{\nu(\phi)\mid \phi\in R, \nu(\phi) \neq \infty\}
\]
is called the \textbf{value group of $\nu$} and is denoted by $\nu R$. The valuation is called \textbf{trivial} if $\nu R = \{0\}$. The set $\mathfrak{p}_\nu: = \nu^{-1}(\infty)$ is a prime ideal of $R$, called the \textbf{support of $\nu$}.

Given a valuation $\nu$ on $R$, we can define a Krull valuation
\[
\overline{\nu}:R/\mathfrak{p}_\nu\lra \Gamma_\infty
\]
by defining $\overline{\nu}(\overline{\phi})=\nu\phi$. The local ring
\[
\VR_\nu:=\{\overline{\phi}\in Quot\left(R/\mathfrak{p}_\nu\right)\mid \overline{\nu}\overline{\phi}\geq 0\}
\]
with maximal ideal
\[
\mathfrak{m}_\nu:=\{\overline{\phi}\in Quot\left(R/\mathfrak{p}_\nu\right)\mid \overline{\nu}\overline{\phi}> 0\}
\]
is called the \textbf{valuation ring of $\nu$}.
\begin{Def}
Two valuations $\nu$ and $\mu$ of $R$ are called \textbf{equivalent} ($\nu\sim \mu$) if the following equivalent conditions are satisfied
\begin{description}
\item[i)] For all $\phi, \psi \in R,\ \nu(\phi) > \nu(\psi)$ if and only if $\mu(\phi) >\mu(\psi)$.
\item[ii)] There is an order preserving isomorphism $f:\nu R\lra \mu R$ such that $\mu=f\circ \nu$.
\item[iii)] $\mathfrak{p}_\nu = \mathfrak{p}_\mu$ and $\VR_\nu =\VR_\mu$.
\end{description}
\end{Def}

\begin{Obs}
If $\nu$ and $\mu$ are Krull valuations on $R$, then $\nu\sim \mu$ if and only if $\VR_\nu=\VR_\mu$. Also, if $\nu$ and $\mu$ are two real valued valuations, then $\nu\sim \mu$ if and only if $\nu=C \mu$ for some $C\in \R$ and $C>0$.
\end{Obs}

We denote by $\mathcal{W}$ ($\mathcal{V}$) for the quotient of $\widetilde{\mathcal{W}}$ ($\widetilde{\mathcal{V}}$) by the equivalence relation defined above.

If $R$ is a local ring with maximal ideal $\mathfrak{m}$ we define
\[
\nu_\MI(\phi)=\max\{n\mid \phi\in \MI^n\}.
\]
We will say that a valuation $\nu$ is \textbf{centered} if $\nu(\phi)\geq 0$ for all $\phi\in R$ and $\nu(\phi)>0$ for all $\phi\in \MI$. If $R$ is Noetherian we get that $\MI$ is finitely generated, so we can define
\[
\nu(\MI):=\min \{\nu(\phi)\mid \phi\in \MI\}.
\]
Observe that $\nu_\MI$ is a centered Krull valuation with $\nu_\MI(\MI)=1$.

\begin{Def}\label{Norval}
Let $R$ be a Noetherian local ring with maximal ideal $\MI$. Given an ordered abelian group $\Gamma$ and a positive element $\gamma\in\Gamma$ we say that a valuation $\nu:R\lra\Gamma_\infty$ is \textbf{normalized by $\gamma$} if $\nu(\MI)=\gamma$.
\end{Def}

Fix an ordered abelian group $\Gamma$ and consider the set
\[
\widetilde{\mathcal W}_\Gamma:=\{\nu\in \widetilde{\mathcal W}\mid \nu:R\lra \Gamma_\infty\}.
\]
The family
\[
\left(\{\nu\in\widetilde{\mathcal{W}}_\Gamma\mid \nu\textnormal{ is normalized by $\gamma$ }\}\right)_{\gamma\in\Gamma^{>0}}
\]
forms a partition of $\widetilde{\mathcal{W}}_\Gamma$. If we consider the particular case of $\Gamma=\R$, then every valuation on $\widetilde{\mathcal W}_\R$ is equivalent to a unique valuation normalized by $1$.

Since $\widetilde{\mathcal V}\subseteq \widetilde{\mathcal W}$, we can ask whether there exists a natural subset of $\widetilde{\mathcal W}$ which can be identified with $\widetilde{\mathcal V}$. Would $\widetilde{\mathcal W}_\R$ work? We are looking here for a mapping
\[
\widetilde{\mathcal W}_\R\lra \widetilde{\mathcal V}
\]
which is surjective and injective. Moreover, would this map respect equivalence classes? We present below a mapping which respects equivalence classes, is injective, but not surjective.

Take an element $\nu\in \widetilde{\mathcal{W}}_\R$. If $\mathfrak{p}_\nu=(0)$ then $\nu$ is a Krull valuation and we define $\textrm{krull}[\nu]:=\nu$. If $\mathfrak{p}_\nu\neq(0)$ we have that $(0)\subsetneq \mathfrak{p}_\nu\subsetneq \mathfrak{m}$ which means that $\mathfrak{p}_\nu=(\phi)$ where $\phi\in R$ is an irreducible element. Indeed, $\PI_\nu\neq \MI$ by assumption and if we take any irreducible element $\phi\in \MI$ then
\[
(0)\subsetneq (\phi)\subseteq \PI_\nu\subsetneq \MI
\]
and $(\phi)=\PI_\nu$ because $(\phi)$ is a prime ideal and $\dim(R)=2$. Define now the Krull valuation
\[
\textrm{krull}[\nu]:R\lra \Z\times\R
\]
given by $\textrm{krull}[\nu](\psi)=(r,\nu(\psi'))$ where $\psi=\phi^r\psi'$ and $(\phi,\psi')=1$.

Observe that this definition does not depend on the choice of $\phi$. Indeed, since $R$ is an UFD, any irreducible element $\psi\in \PI_\nu=(\phi)$, would be of the form $u\cdot \phi$ where $u\in R^\times$. It is easy to see that given two valuations $\nu,\mu\in \widetilde{\mathcal W}_\R$, then
\[
\nu\sim \mu\Llr \textrm{krull}[\nu]\sim \textrm{krull}[\mu].
\]
Therefore, the mapping $krull$ induces an injective mapping $\mathcal W_\R\lra \mathcal V$ (which we call again $krull$). We want to study the properties of this mapping.

Take any Krull valuation $\nu:R\lra \Gamma_\infty$. If $rk(\Gamma)=1$ then we can embed $\Gamma$ in $\R$, so there exists a valuation $\nu'\in \widetilde{\mathcal W}_\R$ equivalent to $\nu$. If $rk(\Gamma)=2$ then we can embed $\Gamma$ into $\R\amalg \R$ with the lexicographic order. If the projection of $\nu R$ onto the second coordinate of $\R\amalg \R$ is non-zero, then we consider the valuation given by
\[
\nu'(\phi):=
\begin{cases}
\pi_2(\nu(\phi))&,\text{ if } \pi_1(\nu(\phi))=0\\
\infty&\text{, otherwise. }
\end{cases}
\]
This is a valuation on $R$ and $\textrm{krull}[\nu']\sim\nu$. If the projection of $\nu R$ onto the second coordinate is zero, then there is no valuation $\nu'$ on $R$ such that $\textrm{krull}[\nu']=\nu$. Therefore, the mapping $krull: \mathcal W_\R\lra \mathcal V$ is not surjective.

\begin{Exa}
\begin{description}
Let's give a few examples for the case of $R=\C[[x,y]]$. The monomial valuation on $R$ defined by $\nu(x)=\alpha$ and $\nu(y)=\beta$ is given by
\[
\nu\left(\sum a_{ij}x^iy^j\right)=\min\{i\alpha+j\beta\mid a_{ij}\neq 0\}.
\]
\item[(i)] Take the monomial valuation defined by $\nu(x)=\nu(y)=1$. Then $\nu$ is a Krull valuation and $\nu=\textrm{krull}[\nu]$.

\item[(ii)] Let $\nu$ be the monomial valuation defined by $\nu(x)=1$ and $\nu(y)=\infty$. Then $\PI_\nu=(y)$ and $\textrm{krull}[\nu]$ is the monomial Krull valuation defined by $\nu(x)=(0,1)$ and $\nu(y)=(1,0)$.

\item[(iii)] Consider the Krull monomial valuation $\mu:R\lra \left(\Z\times \Z\right)_\infty$ (with lexicographic order) given by $\mu(x)=(1,0)$ and $\mu(y)=(1,1)$. This is a Krull valuation on $R$ such that there is no valuation $\nu$ on $R$ with $\textrm{krull}[\nu]=\mu$. This shows that $krull: \mathcal W_\R\lra \mathcal V$ is not surjective.
\end{description}
\end{Exa}

\section{The existence of infimum of a set of valuations}\label{Tree}

We will now define rooted non-metric trees and discuss the difference of our way of defining it and the definition given in \cite{Fav_1} and \cite{Gra}.  

\begin{Def}\label{Def_2}
A \textbf{rooted non-metric tree} is a poset $(\mathcal{T},\leq)$ such that:
\begin{description}
\item[(T1)] There exists a (unique) smallest element $\tau_0\in\mathcal{T}$.
\item[(T2)] Every set of the form $I_\tau=\{\sigma\in\mathcal{T}\mid \sigma\leq \tau\}$ is isomorphic (as ordered set) to a real interval.
\item[(T3)] Every totally ordered convex subset of $\mathcal{T}$ is isomorphic to a real interval.
\item[(T4)] Every non-empty subset $\mathcal{S}$ of $\mathcal{T}$ has an infimum in $\mathcal{T}$.
\end{description}
\end{Def}

\begin{Obs}
In \cite{Fav_1} and \cite{Gra} the authors define a rooted non-metric tree without the condition \textbf{(T4)}. In \cite{Fav_1} the authors state that the property \textbf{(T4)} follows from the completeness of the real numbers and the previous properties, which is not true, as the following example shows.
\end{Obs}

\begin{Exa}\label{Exa_1}
Take $X=[0,1)\cup \{x,y\}$ and extend the natural order on $[0,1)$ to $X$ by setting $x,y>[0,1)$ and stating that $x$ and $y$ are incomparable. Then \textbf{(T1)}, \textbf{(T2)} and \textbf{(T3)} hold for $(X,\leq)$, but the set $\{x,y\}$ does not have an infimum.
\end{Exa}

\begin{Lema}
Under the conditions \textbf{(T1)} and \textbf{(T2)}, the following conditions are equivalent:
\begin{description}
\item[(T4)] Every non-empty subset $\mathcal S\subseteq \mathcal{T}$ has an infimum.
\item[(T4')] Given two elements $\tau,\sigma\in\mathcal{T}$, the set $\{\tau,\sigma\}$ has an infimum $\tau\wedge\sigma$.
\end{description}
\end{Lema}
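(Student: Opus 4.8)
The plan is to prove the equivalence $\textbf{(T4)} \Leftrightarrow \textbf{(T4')}$ in the presence of $\textbf{(T1)}$ and $\textbf{(T2)}$. The direction $\textbf{(T4)} \Rightarrow \textbf{(T4')}$ is immediate, since $\{\tau,\sigma\}$ is a non-empty subset of $\mathcal{T}$; so the whole content lies in $\textbf{(T4')} \Rightarrow \textbf{(T4)}$. Given a non-empty $\mathcal{S} \subseteq \mathcal{T}$, the idea is to build the infimum as the supremum of all pairwise infima, using that each $I_\tau$ is order-isomorphic to a real interval (so that suprema of bounded sets of ``lower bounds'' exist inside a single such interval).

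Here is how I would carry it out. Fix any $\tau^* \in \mathcal{S}$. For every $\sigma \in \mathcal{S}$, the pairwise infimum $\tau^* \wedge \sigma$ exists by $\textbf{(T4')}$ and satisfies $\tau^*\wedge\sigma \le \tau^*$, so it lies in $I_{\tau^*}$. Consider the set $A = \{\tau^* \wedge \sigma \mid \sigma \in \mathcal{S}\} \subseteq I_{\tau^*}$. Since $I_{\tau^*}$ is order-isomorphic to a real interval and $A$ is non-empty and bounded below (by $\tau_0$, which lies in $I_{\tau^*}$ by $\textbf{(T1)}$ and $\textbf{(T2)}$) and above (by $\tau^*$), the set $A$ has an infimum $\tau := \inf A$ computed inside $I_{\tau^*}$; one checks this is genuinely the infimum of $A$ in $\mathcal{T}$, because any lower bound of $A$ in $\mathcal{T}$ that is $\le \tau^*\wedge\sigma_0$ for some $\sigma_0\in\mathcal S$ automatically lies in $I_{\tau^*}$. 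I then claim $\tau = \inf \mathcal{S}$. To see $\tau$ is a lower bound: fix $\sigma\in\mathcal S$; then $\tau \le \tau^*\wedge\sigma \le \sigma$, so $\tau\le\sigma$. To see maximality: let $\rho$ be any lower bound of $\mathcal{S}$ in $\mathcal{T}$. Then $\rho \le \tau^*$ and $\rho \le \sigma$ for all $\sigma \in \mathcal{S}$, hence $\rho \le \tau^*\wedge\sigma$ for every $\sigma$, i.e. $\rho$ is a lower bound of $A$. Also $\rho\in I_{\tau^*}$ since $\rho\le\tau^*$. Therefore $\rho \le \inf A = \tau$ by the infimum property inside $I_{\tau^*}$. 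This shows $\tau = \inf\mathcal{S}$, establishing $\textbf{(T4)}$.

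The subtle point, and the one I expect to be the main obstacle, is the interplay between ``infimum computed inside the interval $I_{\tau^*}$'' and ``infimum in the whole poset $\mathcal{T}$.'' One must be careful that a set sitting inside $I_{\tau^*}$ has the same lower bounds in $I_{\tau^*}$ as in $\mathcal{T}$: this needs the observation that any element of $\mathcal{T}$ which is $\le$ some element of $I_{\tau^*}$ is itself in $I_{\tau^*}$, which follows from transitivity and the definition of $I_{\tau^*}$ as a down-set. Once that is pinned down, the fact that $I_{\tau^*}\cong$ a real interval gives the needed completeness (every non-empty bounded-below subset has an infimum), and the verification that $\tau$ works is the straightforward chain of inequalities above. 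No further properties of $\mathcal{T}$ (in particular not $\textbf{(T3)}$) are needed for this lemma.
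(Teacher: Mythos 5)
Your proof is correct and follows essentially the same route as the paper: fix $\tau^*\in\mathcal S$, form the pairwise infima $\tau^*\wedge\sigma$ inside $I_{\tau^*}\cong$ a real interval, take their infimum there using completeness of $\R$, and check it is the infimum of $\mathcal S$ in $\mathcal T$. Your explicit observation that every lower bound of $\mathcal S$ (or of $A$) lies in the down-set $I_{\tau^*}$, so that the infimum computed in the interval agrees with the infimum in $\mathcal T$, is exactly the point the paper's verification-by-contradiction uses implicitly, and you state it more carefully.
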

\begin{proof}
\textbf{(T4')} is a particular case of \textbf{(T4)}. Assume now that \textbf{(T4')} holds and take $\mathcal S\subseteq \mathcal{T}$ a non-empty subset. We have to prove that $\mathcal S$ has an infimum. Fix an element $\tau\in \mathcal S$ and let
\[
\Phi_\tau:[\tau_0,\tau]\lra [a,b]\subseteq \R
\]
be the isomorphism given by property \textbf{(T2)}. For each $\sigma\in \mathcal S$, by property \textbf{(T4')}, there exists an element $\tau\wedge\sigma\in \mathcal T$ which is the infimum of $\{\tau,\sigma\}$ in $\mathcal T$. Define the element
\[
a_{\sigma}=\Phi_\tau(\tau\wedge\sigma)\in [a,b].
\]
Since $\R$ is complete, we have that $\{a_{\sigma}\mid \sigma\in \mathcal S\}$ has an infimum $a_0\in [a,b]$. Define the element $\sigma_0=\Phi_\tau^{-1}(a_0)\in\mathcal T$. Let's prove that $\sigma_0=\inf \mathcal S$. If not, there would be an element $\sigma_0'\in \mathcal{T}$ such that $\sigma_0<\sigma_0'\leq \sigma$ for all $\sigma\in \mathcal S$. Then $\sigma'_0\leq \tau\wedge\sigma$  and hence $a_0=\Phi_\tau(\sigma_0)<\Phi_\tau(\sigma'_0)\leq a_\sigma$ for all $\sigma\in \mathcal S$, which shows that $a_0<\inf\{ a_\sigma\mid \sigma\in \mathcal S\}$, a contradiction.
\end{proof} 

We will now define some properties associated with a non-metric tree.
\begin{Def}\label{Def_1}
\begin{description}

\item[(i)] Given a non-empty subset $\mathcal{S}\subseteq\mathcal{T}$ we define the \textbf{join} $\displaystyle\bigwedge_{\tau\in\mathcal{S}}\tau$ of $\mathcal{S}$ to be the infimum of $\mathcal{S}$.

\item[(ii)] Given two elements $\tau,\sigma\in\mathcal{T}$ we define the \textbf{closed segment} connecting them by
\[
\left[\tau,\sigma\right]:=\{\alpha\in\mathcal{T}\mid (\tau\wedge\sigma\leq\alpha\leq \tau)\vee (\tau\wedge\sigma\leq\alpha\leq \sigma)\}.
\]
We define similarly $\left]\tau,\sigma\right]$ and $\left[\tau,\sigma\right[$.

\item[(iii)] Take a point $\tau\in\mathcal{T}$ and define an equivalence relation on $\mathcal{T}\backslash \{\tau\}$ by setting
\[
\sigma\sim_{\tau}\alpha\Llr\left]\tau,\sigma\right]\cap\left]\tau,\alpha\right]\neq\emptyset.
\]
The \textbf{tangent space of $\mathcal{T}$ at $\tau$} is the set of equivalence classes of $\mathcal{T}\backslash \{\tau\}$. An equivalence class $[\sigma]_{\tau}\in\mathcal{T}\backslash \{\tau\}/\sim_{\tau}$ is called a \textbf{tangent vector at $\tau$}.

\item[(iv)] The \textbf{weak tree topology} on $\mathcal{T}$ is the topology generated by the tangent vectors at points of $\mathcal{T}$, i.e., the open sets are unions of finite intersections of sets of the form $[\sigma]_{\tau}$.

\item[(v)] A \textbf{parametrization} of a rooted non-metric tree is an increasing (or decreasing) mapping $\Psi:\mathcal{T}\lra\left[-\infty,\infty\right]$ such that its restriction to every totally ordered convex subset of $\mathcal{T}$ is an isomorphism (of ordered sets) onto a real interval.

\item[(vi)] Given a (increasing) parametrization $\Psi:\mathcal{T}\lra\left[1,\infty\right]$ we define a metric on $\mathcal{T}$ by setting
\[
d_{\Psi}(\tau,\sigma)=\displaystyle\left(\frac{1}{\Psi(\tau\wedge\sigma)}-\frac{1}{\Psi(\tau)}\right)+\left(\frac{1}{\Psi(\tau\wedge\sigma)}-\frac{1}{\Psi(\sigma)}\right).
\]
\end{description}
\end{Def}

\begin{Obs}
Observe that the definitions above depend strongly on the existence of an infimum for any two given elements.
\end{Obs}

We will start to prove now that every pair of valuations centered at a two-dimensional regular local ring $(R,\MI)$ admits an infimum.

Let $(R,\MI)$ be a local ring and consider the set
\[
\mathcal S=\left(R^\times\cup\{0\}\right)^2\setminus \{(0,0)\}.
\]

Define a relation on $\mathcal S$ by setting
\[
(a_1,b_1)\sim (a_2,b_2)\Llr a_1b_2-a_2b_1\in \MI.
\]

\begin{Lema}
This relation is an equivalence relation.
\end{Lema}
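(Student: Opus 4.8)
The plan is to verify the three defining properties of an equivalence relation; only transitivity requires an idea. Reflexivity is immediate, since $a_1b_1-a_1b_1=0\in\MI$. Symmetry holds because $\MI$ is an ideal, hence closed under negation: if $a_1b_2-a_2b_1\in\MI$ then $a_2b_1-a_1b_2=-(a_1b_2-a_2b_1)\in\MI$.

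For transitivity, suppose $(a_1,b_1)\sim(a_2,b_2)$ and $(a_2,b_2)\sim(a_3,b_3)$, so that $a_1b_2-a_2b_1\in\MI$ and $a_2b_3-a_3b_2\in\MI$, and put $\delta:=a_1b_3-a_3b_1$, which is the quantity we must show lies in $\MI$. The key step is the pair of identities
\[
a_2\,\delta=a_1(a_2b_3-a_3b_2)+a_3(a_1b_2-a_2b_1),\qquad b_2\,\delta=b_3(a_1b_2-a_2b_1)+b_1(a_2b_3-a_3b_2),
\]
each of which is checked by expanding both sides. Since the right-hand sides are $R$-linear combinations of the two elements assumed to be in $\MI$, we conclude $a_2\delta\in\MI$ and $b_2\delta\in\MI$.

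To finish I use the description of $\mathcal S$: as $(a_2,b_2)\in\mathcal S=(R^\times\cup\{0\})^2\setminus\{(0,0)\}$, at least one of $a_2,b_2$ is nonzero, and a nonzero element of $R^\times\cup\{0\}$ is a unit of $R$. If $a_2\in R^\times$, multiplying $a_2\delta\in\MI$ by $a_2^{-1}$ gives $\delta\in\MI$; if instead $b_2\in R^\times$, multiply $b_2\delta\in\MI$ by $b_2^{-1}$. In either case $a_1b_3-a_3b_1\in\MI$, that is, $(a_1,b_1)\sim(a_3,b_3)$.

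The only genuine obstacle is transitivity, and the step that actually exploits the hypothesis is the last one: it is precisely because the middle pair $(a_2,b_2)$ has a unit coordinate that we may cancel it, which is also the reason the relation is defined on $\mathcal S$ rather than on all of $R^2$. Beyond that, the two displayed identities are the only computation involved, and the argument is symmetric under interchanging the first and second coordinates.
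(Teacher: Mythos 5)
Your proof is correct and follows essentially the same route as the paper: the same two identities (up to sign) expressing $a_2\delta$ and $b_2\delta$ as $R$-combinations of the hypothesized elements of $\MI$, followed by cancelling whichever of $a_2,b_2$ is a unit. The only cosmetic difference is that you record both identities up front rather than splitting into the cases $a_2\neq 0$ and $a_2=0$ first.
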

\begin{proof}
This relation is clearly reflexive and symmetric, so it remains to show that it is transitive. Suppose that $(a_1,b_1)\sim (a_2,b_2)$ and $(a_2,b_2)\sim (a_3,b_3)$. By definition, we have that $a_1b_2-a_2b_1,a_2b_3-a_3b_2\in \MI$. If $a_2\neq 0$ then we have that
\[
a_2(a_3b_1-a_1b_3)=a_3a_2b_1-a_3a_1b_2+a_1a_3b_2-a_1a_2b_3=a_3(a_2b_1-a_1b_2)+a_1(a_3b_2-a_2b_3)\in \MI
\]
and since $a_2\in R^\times$ we have that $a_3b_1-a_1b_3\in \MI$. If $a_2=0$ then $b_2\neq 0$ and we have
\[
b_2(a_3b_1-a_1b_3)=b_1a_3b_2-b_1a_2b_3+b_3a_2b_1-b_3a_1b_2=b_1(a_3b_2-a_2b_3)+b_3(a_2b_1-a_1b_2)\in \MI
\]
and again $a_3b_1-a_1b_3\in \MI$.
\end{proof}

Suppose that $(R,\MI)$ is a two-dimensional regular local ring and let $(x,y)$ be a regular system of parameters of $\mathfrak{m}$. Take a valuation $\nu$ centered at $R$.

\begin{Lema}
Take $(a_1,b_1),(a_2,b_2)\in \mathcal S$ with $(a_1,b_1)\sim (a_2,b_2)$. Then
\[
\nu(a_1x+b_1y)>\nu(\MI)\Llr\nu(a_2x+b_2y)>\nu(\MI).
\]
Also, if there exist $(a_1,b_1),(a_2,b_2)\in \mathcal S$ such that $\nu(a_1x+b_1y)>\nu(\MI)$ and $\nu(a_2x+b_2y)>\nu(\MI)$ then $(a_1,b_1)\sim (a_2,b_2)$.
\end{Lema}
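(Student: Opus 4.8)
The plan is to reduce everything to two elementary identities in $R$. Write $c:=a_1b_2-a_2b_1$, so that $(a_1,b_1)\sim(a_2,b_2)$ means precisely $c\in\MI$. A direct computation gives
\[
a_1(a_2x+b_2y)=a_2(a_1x+b_1y)+c\,y\qquad\text{and}\qquad b_1(a_2x+b_2y)=b_2(a_1x+b_1y)-c\,x.
\]
Throughout I will use that $\nu$ is centered, hence $\nu(r)\geq 0$ for every $r\in R$, $\nu(r)=0$ for $r\in R^\times$, and $\nu(r)>0$ for $r\in\MI$; in particular $\nu(\MI)>0$. I will also use the normalization fact that, since $(x,y)$ is a regular system of parameters, $\MI=(x,y)$ and therefore
\[
\nu(\MI)=\min\{\nu(x),\nu(y)\},
\]
because any $\phi=rx+sy\in\MI$ satisfies $\nu(\phi)\geq\min\{\nu(x),\nu(y)\}$ while $x,y\in\MI$.

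For the first assertion I would assume $c\in\MI$ and $\nu(a_1x+b_1y)>\nu(\MI)$. Since $(a_1,b_1)\in\mathcal S$, at least one of $a_1,b_1$ is a unit. If $a_1\in R^\times$, look at the first identity: its right-hand side is a sum of $a_2(a_1x+b_1y)$, of valuation $\geq\nu(a_1x+b_1y)>\nu(\MI)$, and $c\,y$, of valuation $\geq\nu(\MI)+\nu(y)>\nu(\MI)$; hence $\nu(a_1(a_2x+b_2y))>\nu(\MI)$, and since $\nu(a_1)=0$ this gives $\nu(a_2x+b_2y)>\nu(\MI)$. If instead $b_1\in R^\times$, the same argument applied to the second identity works verbatim. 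This proves the implication from left to right, and since $\sim$ is symmetric the reverse implication follows by exchanging the indices $1$ and $2$.

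For the second assertion, suppose $\nu(a_1x+b_1y)>\nu(\MI)$ and $\nu(a_2x+b_2y)>\nu(\MI)$, and assume for contradiction that $c\notin\MI$. Then $c\in R^\times$ (as $R$ is local), so $\nu(c)=0$. In the first identity both terms on the right have valuation $>\nu(\MI)$ — here $\nu(a_1(a_2x+b_2y))=\nu(a_1)+\nu(a_2x+b_2y)>\nu(\MI)$ holds whether $a_1$ is a unit or $0$, since $\nu(a_1)\geq 0$ — so $\nu(c\,y)>\nu(\MI)$, that is, $\nu(y)>\nu(\MI)$. Applying the same reasoning to the second identity gives $\nu(x)>\nu(\MI)$. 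But then $\min\{\nu(x),\nu(y)\}>\nu(\MI)$, contradicting $\nu(\MI)=\min\{\nu(x),\nu(y)\}$. Hence $c\in\MI$, i.e.\ $(a_1,b_1)\sim(a_2,b_2)$.

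I do not expect a serious obstacle here: the argument is bookkeeping with \textbf{(V1)}, \textbf{(V2)} and the centeredness of $\nu$. The only two ingredients that require a thought are spotting the pair of identities above, and the normalization identity $\nu(\MI)=\min\{\nu(x),\nu(y)\}$, which is exactly what forces the contradiction in the second part. The one point where care is needed is to phrase the estimates through inequalities $\nu(\cdot)\geq 0$ so that the possibility $a_i=0$ or $b_i=0$ causes no trouble, and to invoke ``one coordinate is a unit'' only in the first assertion, where it is genuinely used to cancel $\nu(a_1)$ or $\nu(b_1)$.
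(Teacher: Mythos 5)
Your proof is correct and takes essentially the same route as the paper: your two identities are exactly the paper's decompositions with the unit coefficient cleared of denominators (e.g.\ $a_2x+b_2y=\frac{a_2}{a_1}(a_1x+b_1y)+\frac{a_1b_2-a_2b_1}{a_1}y$ multiplied through by $a_1$), and both parts rest on the same use of \textbf{(V1)}, \textbf{(V2)} and the identity $\nu(\MI)=\min\{\nu(x),\nu(y)\}$. The only cosmetic differences are that you get the reverse implication of the equivalence by symmetry of $\sim$ rather than the paper's explicit treatment of the case $\nu(a_1x+b_1y)=\nu(\MI)$, and you handle possibly zero coefficients uniformly via $\nu(\cdot)\geq 0$ instead of a case split.
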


\begin{proof}
For the first statement, suppose that $a_1\neq 0$ (and so $a_2\neq 0$). Then
\begin{displaymath}
\begin{array}{rl}
a_2x+b_2y & =\displaystyle\frac{a_2}{a_1}\left(a_1x+\frac{a_1b_2}{a_2}y\right)\\
          & =\displaystyle\frac{a_2}{a_1}\left(a_1x+b_1y-b_1y+\frac{a_1b_2}{a_2}y\right)\\
          & =\displaystyle\frac{a_2}{a_1}\left(a_1x+b_1y\right)+\frac{a_1b_2-a_2b_1}{a_1}y
          
\end{array}
\end{displaymath}
Since $(a_1,b_1)\sim (a_2,b_2)$ we have that
\[
\nu\left(\frac{a_1b_2-a_2b_1}{a_1}y\right)=\nu(a_1b_2-a_2b_1)-\nu(a_1)+\nu(y)>\nu(y)\geq \nu(\MI).
\]

If $\nu(a_1x+b_1y)>\nu(\MI)$ then
\[
\nu(a_2x+b_2y)\geq \min\left\{\nu\left(\frac{a_2}{a_1}\left(a_1x+b_1y\right)\right),\nu\left(\frac{a_1b_2-a_2b_1}{a_1}y\right)\right\}>\nu(\MI),
\]
and if $\nu(a_1x+b_1y)=\nu(\MI)<\nu\left(\displaystyle\frac{a_1b_2-a_2b_1}{a_1}y\right)$ then
\[
\nu(a_2x+b_2y)= \min\left\{\nu\left(\frac{a_2}{a_1}\left(a_1x+b_1y\right)\right),\nu\left(\frac{a_1b_2-a_2b_1}{a_1}y\right)\right\}=\nu(\MI).
\]
If $a_1=0$ then $b_1\neq 0\neq b_2$ and we proceed similarly to get
\[
a_2x+b_2y=\displaystyle\frac{b_2}{b_1}\left(a_1x+b_1y\right)+\frac{a_2b_1-a_1b_2}{b_1}x.
\]

For the second statement, suppose that
\[
\nu(a_1x+b_1y)>\nu(\MI)\textnormal{ and }\nu(a_2x+b_2 y)>\nu(\MI)
\]
and that $(a_1,b_1)\nsim (a_2,b_2)$. This would mean that $a_1b_2-a_2b_1\notin \MI$, so $\nu(a_2b_1-a_1b_2)=0$. Then we would have that 
\begin{displaymath}
\begin{array}{rcl}
\nu(x)&=&\nu(a_2b_1x-a_1b_2x)\\
    &=&\nu(a_2b_1x+b_1b_2y-b_1b_2y-a_1b_2x)\\
    &=&\nu(b_1(a_2x+b_2y)-b_2(a_1x+b_1y))\\
    &>&\nu(\MI)
\end{array}
\end{displaymath}
and
\begin{displaymath}
\begin{array}{rcl}
\nu(y)&=&\nu(a_2b_1y-a_1b_2y)\\
    &=&\nu(a_2b_1y+a_1a_2x-a_1a_2x-a_1b_2y)\\
    &=&\nu(a_2(a_1x+b_1y)-a_1(a_2x+b_2y))\\
    &>&\nu(\MI)
\end{array}
\end{displaymath}
which is a contradiction to $\nu(\mathfrak{m})=\min\{\nu(x),\nu(y)\}$.
\end{proof}
\begin{Def}
Take an element $\lambda\in \mathcal S/\sim$. We say that 
\[
\nu(x+\lambda y)>\nu(\MI)
\]
if $\nu(a_1x+b_1y)>\nu(\MI)$ for some (and hence for every) $(a_1,b_1)\in \lambda$. Analogously, we say that 
\[
\nu(x+\lambda y)=\nu(\MI)
\]
if $\nu(a_1x+b_1y)=\nu(\MI)$ for some (and hence for every) $(a_1,b_1)\in \lambda$.
\end{Def}

\begin{Cor}\label{Cor_1}
Given two centered valuations $\nu,\mu:R\lra \R_\infty$, there exist elements $a,b\in R^\times\cup\{0\}$ such that $\nu(\MI)=\nu(ax+by)$ and $\mu(\MI)=\mu(ax+by)$.
\end{Cor}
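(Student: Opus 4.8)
The plan is a short pigeonhole argument resting on the two Lemmas just proved. First I would record the elementary observation that for a centered valuation $\nu:R\lra\R_\infty$ and any $(a,b)\in\mathcal S$ one has $\nu(ax+by)\geq\nu(\MI)$. Indeed, any unit $u$ satisfies $0=\nu(1)=\nu(u)+\nu(u^{-1})$ with both summands $\geq 0$, so $\nu(R^\times)=\{0\}$; hence when $a,b\in R^\times$ we get $\nu(ax+by)\geq\min\{\nu(x),\nu(y)\}=\nu(\MI)$, and the cases $a=0$ or $b=0$ are immediate since then $ax+by$ is $by$ or $ax$ up to a unit. Consequently, for every class $\lambda\in\mathcal S/{\sim}$ exactly one of $\nu(x+\lambda y)>\nu(\MI)$ or $\nu(x+\lambda y)=\nu(\MI)$ holds, and likewise for $\mu$.

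Next I would invoke the second Lemma above: if two pairs of $\mathcal S$ both give a value strictly larger than $\nu(\MI)$, then they are $\sim$-equivalent. Hence there is \emph{at most one} class $\lambda\in\mathcal S/{\sim}$ with $\nu(x+\lambda y)>\nu(\MI)$, and at most one class with $\mu(x+\lambda y)>\mu(\MI)$. So altogether at most two classes are ``bad'' (for $\nu$ or for $\mu$).

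It remains to see that $\mathcal S/{\sim}$ has at least three elements, so that a class avoiding both bad ones exists. Conceptually, reduction modulo $\MI$ identifies $\mathcal S/{\sim}$ with $\p^1(R/\MI)$, which has $|R/\MI|+1\geq 3$ points; but for the proof it suffices to exhibit three pairwise non-equivalent classes, e.g. those of $(1,0)$, $(0,1)$, $(1,1)$, whose pairwise cross-differences are $\pm 1\notin\MI$. Choosing $\lambda\in\mathcal S/{\sim}$ different from the bad class of $\nu$ and from the bad class of $\mu$, and any representative $(a,b)\in\lambda$, we obtain $\nu(ax+by)=\nu(\MI)$ and $\mu(ax+by)=\mu(\MI)$, with $a,b\in R^\times\cup\{0\}$ as required. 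There is no real obstacle here — the statement is genuinely this counting argument — the only points needing care are that a centered real-valued valuation kills units and that $\mathcal S/{\sim}$ is large enough, both routine.
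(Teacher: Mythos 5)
Your proposal is correct and follows essentially the same route as the paper: by the second part of the preceding Lemma there is at most one ``bad'' class for $\nu$ and at most one for $\mu$, and since $\mathcal S/\sim$ has at least three classes one can pick a representative $(a,b)$ avoiding both. Your added details (that $\nu(ax+by)\geq\nu(\MI)$ always holds because centered real-valued valuations vanish on units, and the explicit classes of $(1,0)$, $(0,1)$, $(1,1)$ witnessing $|\mathcal S/\sim|\geq 3$) simply make explicit what the paper leaves implicit.
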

\begin{proof}
By the second part of the Lemma above, there exist at most one $\lambda_\nu\in \mathcal S/\sim$ and at most one $\lambda_\mu\in \mathcal S/\sim$ such that $\nu(x+\lambda_\nu y)>\nu(\MI)$ and $\mu(x+\lambda_\mu y)=\mu(\MI)$. Since $|\mathcal S/\sim|\geq 3$ for any ring $R$ there exists an element $\lambda\in \mathcal S/\sim$ with $\lambda_\nu\neq\lambda\neq\lambda_\mu$. Take any $(a,b)\in \lambda$ and we have that $\nu(ax+by)=\nu(\MI)$ and $\mu(ax+by)=\mu(\MI)$.
\end{proof}

\begin{Cor}
If $R=k[[x,y]]$ for an algebraically closed field $k$ and $\nu\neq\nu(\MI)\cdot\nu_\MI$ then there exists $\lambda\in \p^1(k)$ such that $\nu(x+\lambda y)>\nu(\MI)$.
\end{Cor}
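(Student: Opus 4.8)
The plan is to argue directly from the hypothesis $\nu\neq\nu(\MI)\cdot\nu_\MI$. First I would record the elementary fact that $\nu\geq\nu(\MI)\cdot\nu_\MI$ holds for every centered valuation $\nu$: if $\nu_\MI(\psi)=n$ then $\psi\in\MI^n$ is a finite sum of products of $n$ elements of $\MI$, so $\nu(\psi)\geq n\cdot\nu(\MI)=\nu(\MI)\cdot\nu_\MI(\psi)$. Since the two valuations are assumed distinct (as functions $R\lra\R_\infty$ — and for real-valued valuations both having value $\nu(\MI)$ on $\MI$ this is the same as being inequivalent), there must be some $\phi\in R$ with $\nu(\phi)>\nu(\MI)\cdot\nu_\MI(\phi)$. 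Fix such a $\phi$; it is necessarily a nonzero non-unit, so $d:=\nu_\MI(\phi)\geq 1$.

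Next I would pass to the leading form. Write $\phi=\phi_d+\phi'$, where $\phi_d\neq 0$ is the homogeneous component of $\phi$ of degree $d$ and $\phi'\in\MI^{d+1}$. By the inequality just recorded, $\nu(\phi')\geq(d+1)\cdot\nu(\MI)>d\cdot\nu(\MI)$. If $\nu(\phi_d)$ were equal to $d\cdot\nu(\MI)$, then $\nu(\phi)=\min\{\nu(\phi_d),\nu(\phi')\}=d\cdot\nu(\MI)$, contradicting the choice of $\phi$; hence $\nu(\phi_d)>d\cdot\nu(\MI)$.

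The one substantive step is now to use that $k$ is algebraically closed: the binary form $\phi_d$ splits into linear factors, $\phi_d=c\prod_{i=1}^{d}(a_ix+b_iy)$ with $c\in k^\times$ and each $(a_i,b_i)\in k^2\setminus\{(0,0)\}$. Each such pair lies in $\mathcal S$ (a nonzero scalar of $k$ is a unit of $R$) and so represents a class $\lambda_i=[a_i:b_i]\in\p^1(k)=\mathcal S/\sim$. Since $c$ is a unit, $\nu(c)=0$, and therefore
\[
\sum_{i=1}^{d}\nu(a_ix+b_iy)=\nu(\phi_d)>d\cdot\nu(\MI).
\]
On the other hand $\nu(a_ix+b_iy)\geq\nu(\MI)$ for every $i$, because $\nu(ax+by)\geq\min\{\nu(x),\nu(y)\}=\nu(\MI)$ for any $(a,b)\in\mathcal S$. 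Comparing the displayed strict inequality with these $d$ lower bounds, some index $i_0$ must satisfy $\nu(a_{i_0}x+b_{i_0}y)>\nu(\MI)$; then $\lambda:=\lambda_{i_0}\in\p^1(k)$ satisfies $\nu(x+\lambda y)>\nu(\MI)$, which is what we want.

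I do not expect a serious obstacle. The only place where real input is needed is the splitting of the leading binary form $\phi_d$ into linear factors, which is exactly where algebraic closedness of $k$ is used; everything else is the bookkeeping of reducing a global deviation ``$\nu\neq\nu(\MI)\cdot\nu_\MI$'' to one already visible on the leading homogeneous form of a single element, together with checking the degenerate cases ($\phi$ a unit or zero) and that the linear factors genuinely index points of $\p^1(k)=\mathcal S/\sim$.
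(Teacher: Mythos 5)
Your proof is correct and follows essentially the same route as the paper: use $\nu\geq\nu(\MI)\cdot\nu_\MI$ to find an element where the inequality is strict, pass to its leading homogeneous form and show that form already has value $>d\cdot\nu(\MI)$, then use algebraic closedness to descend to a linear form. The only difference is that you spell out the final factorization-and-pigeonhole step (splitting the binary form into linear factors and comparing $\sum_i\nu(a_ix+b_iy)>d\cdot\nu(\MI)$ with the bounds $\nu(a_ix+b_iy)\geq\nu(\MI)$), which the paper leaves implicit in the sentence ``if $k$ is algebraically closed then $m$ can be chosen to be of degree one.''
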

\begin{proof}
We will prove that if $R=k[[x,y]]$ and $\nu\neq\nu(\MI)\cdot\nu_\MI$ where $k$ is any field, then there exists a homogeneous polynomial $m$ in $(x,y)$ such that $\nu(m)>\nu_\MI(m)\cdot\nu(\MI)$. Consequently, if $k$ is algebraically closed then we can find a homogeneous polynomial $m$ of degree $1$ such that $\nu(m)>\nu_\MI(m)\cdot\nu(\MI)=\deg (m)\cdot\nu(\MI)=\nu(\MI)$.

Since $\nu\neq \nu_\MI\cdot\nu(\MI)$ there exists a power series
\[
p(x,y)=\sum_{i=1}^\infty m_i \in k[[x,y]]\textnormal{ where }m_i\textnormal{ are monomials in }x\textnormal{ and }y,
\]
such that $\nu(p)>\nu_\MI(p)\cdot\nu(\MI)=k\cdot \nu(\MI)$ where $k:=ord_\MI(p)$. Write $p$ as sum of homogeneous polynomials, i.e.,
\[
p(x,y)=\sum_{j\geq k} p_j,\textnormal{ where }p_j=\displaystyle\sum_{\deg(m_i)=j}m_i.
\]
Since $\nu(p_j)\geq \min\{\nu(m_i)\mid \deg(m_i)=j\}\geq j\cdot \nu(\MI)$ and $\nu(p-p_k)>k\cdot\nu(\MI)$ we have that $\nu(p_k)> k\cdot\nu(\MI)$. Indeed, if $\nu(p_k)=k\cdot\nu(\MI)$ we would have that
\[
\nu(p)=\min\{\nu(p-p_k),\nu(p_k)\}=\nu(p_k)=k\cdot\nu(\MI)=\nu_\MI(p)\cdot\nu(\MI).
\]
Therefore, there exists a homogeneous polynomial $m\in k[[x,y]]$ (namely $m=p_k$) such that $\nu(m)>\nu_\MI(m)\cdot\nu(\MI)$. If $k$ is algebraically closed then $m$ can be chosen to be of degree one.
\end{proof}

\begin{Obs}
In \cite{Fav_1} Corollary 3.19, page 48, it is proved that if two valuations $\nu$ and $\mu$ on $\C[[x,y]]$ are given where $(x,y)$ are coordinates such that $\nu(x)=\mu(x)=1\leq \min\{\nu(y),\mu(y)\}$ then there exists the infimum for $\nu$ and $\mu$. By the Corollary above, we conclude that every pair of valuations on $\C[[x,y]]$ have an infimum.
\end{Obs}

For each valuation $\nu$ centered at $R$ take a regular system of parameters $(x,y)$ such that $\nu(x)\leq \nu(y)$. Let $\nu'$ be the unique extension of $\nu$ to $R\left[\displaystyle\frac yx\right]$ with $\nu'\left(\displaystyle\frac yx\right) =\nu(y)-\nu(x)$ and let
\[
\mathfrak q^{(1)}_\nu=\left\{r\in R\left[\displaystyle\frac yx\right]\mid \nu'(r)>0\right\}.
\]
Then the local ring
\[
R_\nu^{(1)}=R\left[\frac yx\right]_{\mathfrak{q}_\nu^{(1)}}
\]
is called the \textbf{quadratic dilatation of $R$ with respect to $\nu$}. If $\dim R_\nu^{(1)}=1$ then $\nu=a\cdot \nu_{\MI}$ for some $a>0$. If $\dim R_\nu^{(1)}=2$ then we proceed as before to get a new local ring $R_\nu^{(2)}$ which is the quadratic dilatation of $R_\nu^{(1)}$ with respect to $\nu^{(1)}$. We can construct inductively a sequence (finite or infinite)
\[
R\subseteq R_\nu^{(1)}\subseteq R_\nu^{(2)}\subseteq \ldots\subseteq R_{\nu}^{(n)}\subseteq \ldots
\]
of regular local rings such that $R_\nu^{(i)}$ is the quadratic dilatation of $R_\nu^{(i-1)}$ with respect to $\nu^{(i-1)}$ (here $R_\nu^{(0)}:=R$). Let $\lambda(\nu)$ be the length of the sequence above, i.e.,
\[
\lambda(\nu)=\begin{cases}n+1&\text{, if }\dim \left(R_{\nu}^{(n)}\right)=2\text{ and }\dim \left(R_{\nu}^{(n+1)}\right)=1\\
        \infty&\text{, if }\dim \left(R_{\nu}^{(n)}\right)=2\text{ for every }n\in \N.\end{cases}
\]
It is proved in \cite{Ab_1} that
\[
\VR_{\textrm{krull}[\nu]}=\bigcup_{i=0}^{\lambda(\nu)} R_{\nu}^{(i)}
\]
where $\VR_{\textrm{krull}[\nu]}$ is the valuation ring of $\textrm{krull}[\nu]$ in $Quot(R)$.
The sequence $\left\{R_\nu^{(i)}\right\}_{i=0}^{\lambda(\nu)}$ is called the sequence of quadratic dilatations of $\nu$ and the sequence $\left\{m_\nu^{(i)}\right\}_{i=0}^{\lambda(\nu)}$ where $m_\nu^{(i)}=\nu^{(i)}\left(\MI^{(i)}_\nu\right)$ is called the \textbf{multiplicity sequence of $\nu$}.

Fix a regular system of parameters $(x,y)$ of $\MI$. For each $\phi\in R\setminus \{0\}$ there exists a unique decomposition $\phi=a_1M_1+\ldots+a_nM_n$ where $a_i\in R\setminus \MI$ and $M_i=x^{r_i}y^{s_i}$ is a pure monomial in $(x,y)$, $0\leq i\leq n$. Take $\gamma_1,\gamma_2\in\R_\infty$ not both equal to $\infty$. Then we define the valuation
\[
\nu(\phi)=\min_{1\leq i\leq n}\{r_i\gamma_1+s_i\gamma_2\}.
\]
This is indeed a valuation (see Lemma 7 of \cite{Gra}) and it is called a monomial valuation in $(x,y)$. It is a Krull valuation if $\gamma_1\neq\infty\neq\gamma_2$ and it is centered if $\gamma_1>0$ and $\gamma_2>0$ (see Lemma 8 of \cite{Gra}).

To prove Theorem \ref{Main} we will use the following Theorem (Theorem 18 of \cite{Gra}):

\begin{Teo}\label{Lem_1}
Let $\nu$ and $\mu$ be two centered valuations of $R$ and suppose that $\mu\leq\nu$. Assume that there exists $s\geq 0$ such that $\dim\left(R_\mu^{(i)}\right)=2$, $m_\nu^{(i)}=m_\mu^{(i)}$ for $0\leq i\leq s$ and either $\dim\left(R_\nu^{(s+1)}\right)=1$ or $\dim\left(R_\nu^{(s+1)}\right)=2$ and $m^{(s+1)}_\nu>m^{(s+1)}_\mu$. Then $R^{(i)}_\nu=R^{(i)}_\mu$ and $0\leq\mu^{(i)}(\phi)\leq\nu^{(i)}(\phi)$ for each $\phi\in R^{(i)}$, $0\leq i\leq s$. Moreover, we have the following possibilities:
\begin{description}
\item[(a)] If $\dim \left(R_\nu^{(s+1)}\right)=1$, then $\lambda(\nu)=\lambda(\mu)=s+1$ and $\nu^{(s)}=\mu^{(s)}=m^{(s)}_\nu\cdot \nu_{\MI^{(s)}}$.

\item[(b)] If $\dim \left(R_\nu^{(s+1)}\right)=2$ and $\dim \left(R_\mu^{(s+1)}\right)=1$, then $s+1=\lambda(\mu)<\lambda(\nu)$ and there exists a monomial valuation $\overline{\mu}^{(s+1)}$ on $R^{(s+1)}_\nu$ such that $\mu^{(s)}$ is the restriction of $\overline{\mu}^{(s+1)}$ to $R^{(s)}_\nu$.

\item[(c)] If $\dim \left(R_\nu^{(s+1)}\right)=2$ and $\dim \left(R_\mu^{(s+1)}\right)=2$, then $s+1<\min\{\lambda(\nu),\lambda(\mu)\}$, $R^{(s+1)}_\nu=R^{(s+1)}_\mu$, $0\leq \mu^{(s+1)}(\phi)\leq \nu^{(s+1)}(\phi)$ for all $\phi\in R^{(s+1)}$ and $\mu^{(s+1)}$ is a monomial valuation on $R^{(s+1)}$.
\end{description}
\end{Teo}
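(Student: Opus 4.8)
The plan is to strip off one quadratic dilatation at a time. The technical core is a single \emph{descent step}: if $\mu\le\nu$ are centered valuations on a two-dimensional regular local ring $B=R_\mu^{(j)}=R_\nu^{(j)}$ with $\mu(\MI_B)=\nu(\MI_B)=:m$ and the quadratic dilatation of $B$ with respect to $\mu$ is two-dimensional, then the dilatation with respect to $\nu$ is two-dimensional as well, $R_\nu^{(1)}=R_\mu^{(1)}$, and the induced valuations satisfy $0\le\mu^{(1)}\le\nu^{(1)}$ on this common ring. Granting this, one obtains $R_\nu^{(i)}=R_\mu^{(i)}$ and $0\le\mu^{(i)}(\phi)\le\nu^{(i)}(\phi)$ for all $\phi\in R^{(i)}$ and $0\le i\le s$ by applying the descent step at $R^{(j)}$ for $j=0,1,\dots,s-1$ --- at stage $j$ the needed hypotheses $\dim R_\mu^{(j)}=\dim R_\mu^{(j+1)}=2$ and $m_\mu^{(j)}=m_\nu^{(j)}$ all hold because $j+1\le s$. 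It then remains to analyse level $s$, where $\mu^{(s)}\le\nu^{(s)}$ are centered on $R^{(s)}:=R_\mu^{(s)}=R_\nu^{(s)}$ with $m_\mu^{(s)}=m_\nu^{(s)}=:m$; this is the case ``$s=0$'' of the theorem.

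To prove the descent step, show that $\mu$ and $\nu$ have the \emph{same center} on the blow-up $X$ of $B$ along $\MI_B$. Both centers lie on the exceptional $\p^1$, and the hypothesis on $\mu$ says precisely that the center of $\mu$ is a closed point of that $\p^1$ (its generic point would give $\mu=m\cdot\nu_{\MI_B}$ and a one-dimensional dilatation). For a centered valuation $\omega$ with $\omega(\MI_B)=m$, the center of $\omega$ on $X$ is the homogeneous prime $J_\omega$ of $\operatorname{gr}_{\MI_B}(B)$ generated by the initial forms of those $\phi\in B$ with $\omega(\phi)>m\cdot\operatorname{ord}_{\MI_B}(\phi)$; since $m_\mu=m_\nu=m$ and $\nu\ge\mu$ we get $J_\mu\subseteq J_\nu$, so the center of $\nu$ is contained in that of $\mu$, and since the latter is a closed point and the former is nonempty they coincide. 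Thus $R_\nu^{(1)}=R_\mu^{(1)}$, a two-dimensional regular local ring. Finally pick a regular system of parameters $(x,y)$ of $\MI_B$ with this common center in the chart $B[y/x]$; then $\mu(x)=\nu(x)=m$ (because $y/x$ is a regular function at the center, so $\mu(y)\ge\mu(x)$ and likewise for $\nu$), and since every element of $B[y/x]$ is of the form $g/x^{n}$ with $g\in B$ we get $\mu^{(1)}(g/x^{n})=\mu(g)-nm\le\nu(g)-nm=\nu^{(1)}(g/x^{n})$, as required; positivity is clear.

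Now the trichotomy at level $s$. In case \textbf{(a)}, $\dim R_\nu^{(s+1)}=1$ forces $\nu^{(s)}=m\cdot\nu_{\MI^{(s)}}$; combining this with the elementary bound $\mu^{(s)}(\phi)\ge\operatorname{ord}_{\MI^{(s)}}(\phi)\cdot\mu^{(s)}(\MI^{(s)})=m\cdot\operatorname{ord}_{\MI^{(s)}}(\phi)$, valid for any centered valuation, together with $\mu^{(s)}\le\nu^{(s)}$, squeezes $\mu^{(s)}=m\cdot\nu_{\MI^{(s)}}=\nu^{(s)}$; hence $\lambda(\mu)=\lambda(\nu)=s+1$. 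In case \textbf{(b)}, $\dim R_\mu^{(s+1)}=1$ forces $\mu^{(s)}=m\cdot\nu_{\MI^{(s)}}$, so $\lambda(\mu)=s+1<\lambda(\nu)$; moreover $\nu_{\MI^{(s)}}$ is the divisorial valuation of the exceptional divisor of the blow-up $R^{(s)}\to R_\nu^{(s+1)}$, which is monomial on $R_\nu^{(s+1)}$ (value $1$ on the exceptional parameter, $0$ on its companion), so $\overline{\mu}^{(s+1)}:=m\cdot\nu_{\MI^{(s)}}$ is a monomial valuation on $R_\nu^{(s+1)}$ whose restriction to $R^{(s)}$ is $\mu^{(s)}$.

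Case \textbf{(c)} is where I expect the main obstacle to sit. Here both dilatations are two-dimensional, so the descent step applies once more (it uses only $m_\mu^{(s)}=m_\nu^{(s)}$), giving $R_\nu^{(s+1)}=R_\mu^{(s+1)}$, $0\le\mu^{(s+1)}\le\nu^{(s+1)}$, and $s+1<\min\{\lambda(\mu),\lambda(\nu)\}$ since both sequences continue. The remaining assertion is that $\mu^{(s+1)}$ is monomial on $R^{(s+1)}$. A first step is available: in the regular parameters $(x,\,y/x-c)$ of $\MI^{(s+1)}$ inherited from the blow-up of $R^{(s)}$ one has $\mu^{(s+1)}(x)=\nu^{(s+1)}(x)=m$, whence $m_\mu^{(s+1)}=\min\{m,\mu^{(s+1)}(y/x-c)\}$ while $m_\nu^{(s+1)}\le m$, so the strict inequality $m_\nu^{(s+1)}>m_\mu^{(s+1)}$ forces $m_\mu^{(s+1)}=\mu^{(s+1)}(y/x-c)<m$: the exceptional parameter strictly overshoots the multiplicity of $\mu^{(s+1)}$. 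Upgrading this single-step fact to the whole dilatation sequence of $\mu^{(s+1)}$ --- showing the exceptional parameter keeps strictly overshooting the running multiplicity at every later stage, so that each subsequent quadratic transform is centered at a coordinate point and no key polynomial of degree $\ge 2$ ever enters, which is exactly what makes $\mu^{(s+1)}$ monomial in a suitable regular system of parameters --- is the hard part, and it should proceed by a secondary induction on the length of the sequence, carrying the comparison $\mu^{(s+1)}\le\nu^{(s+1)}$ along.
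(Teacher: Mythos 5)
A preliminary remark on the comparison itself: the paper contains no proof of this statement --- it is quoted verbatim as Theorem 18 of \cite{Gra} and used as a black box --- so your attempt can only be measured against Granja's argument, not against anything internal to the paper. With that said, your architecture is reasonable and most of it is correct in outline: the descent step (both valuations have the same center on the blow-up, via the inclusion $J_\mu\subseteq J_\nu$ of ideals of initial forms, whence $R^{(1)}_\nu=R^{(1)}_\mu$ and $0\le\mu^{(1)}\le\nu^{(1)}$), its iteration up to level $s$, the squeeze $m\cdot\nu_{\MI^{(s)}}\le\mu^{(s)}\le\nu^{(s)}=m\cdot\nu_{\MI^{(s)}}$ in case \textbf{(a)}, and the choice $\overline{\mu}^{(s+1)}=m\cdot\nu_{\MI^{(s)}}$ (the exceptional divisorial valuation, monomial with weights $(m,0)$ in the inherited parameters) in case \textbf{(b)} all work, although the facts that $J_\omega$ really defines a point of the exceptional $\p^1$ and that $\nu_{\MI^{(s)}}$ restricts to a monomial valuation on $R^{(s+1)}_\nu$ are asserted rather than proved and would need a few lines each.

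The genuine gap is the one you flag yourself: in case \textbf{(c)} the monomiality of $\mu^{(s+1)}$ --- which is exactly the part of the theorem that the proof of Theorem \ref{Main} leans on, as the Remark following that proof points out --- is not established; you only prove $\mu^{(s+1)}\left(y^{(s+1)}\right)=m^{(s+1)}_\mu<m=\mu^{(s+1)}\left(x^{(s+1)}\right)$. The continuation you propose (a secondary induction along the quadratic sequence of $\mu^{(s+1)}$, ``carrying the comparison along'') is doubtful as stated: the hypothesis $m^{(s+1)}_\nu>m^{(s+1)}_\mu$ is used only at the first step, and nothing constrains the multiplicities of $\nu$ beyond level $s+1$, so it is unclear what would keep forcing the later centers of $\mu^{(s+1)}$ to be coordinate points. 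In fact no induction is needed; a direct argument closes the gap. Write $u=x^{(s+1)}$, $v=y^{(s+1)}$, $\beta=\mu^{(s+1)}(v)<m=\mu^{(s+1)}(u)$, and recall $\nu^{(s+1)}(u)=m$, $\nu^{(s+1)}(v)\ge m^{(s+1)}_\nu>\beta$ and $\mu^{(s+1)}\le\nu^{(s+1)}$ on $R^{(s+1)}$. If $\mu^{(s+1)}$ were not the monomial valuation in $(u,v)$, take $\phi=\sum a_iu^{r_i}v^{s_i}$ (units $a_i$) with $\mu^{(s+1)}(\phi)>w:=\min_i(r_im+s_i\beta)$; discard the terms of monomial value $>w$ and factor out $v^a$, where $a$ is the smallest $v$-exponent among the minimal terms (legitimate because $\mu^{(s+1)}(v^a)=a\beta$ exactly). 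This produces $G=\sum a_iu^{r_i}v^{s_i-a}$ whose monomials all have value $w-a\beta$, exactly one of which is a pure power of $u$, and with $\mu^{(s+1)}(G)>w-a\beta$. But $\nu^{(s+1)}$ takes the value exactly $w-a\beta$ on that pure $u$-term and values strictly larger than $w-a\beta$ on every other term (since their $v$-exponents are positive and $\nu^{(s+1)}(v)>\beta$), so $\nu^{(s+1)}(G)=w-a\beta<\mu^{(s+1)}(G)$, contradicting $\mu^{(s+1)}\le\nu^{(s+1)}$. Until an argument of this kind is supplied, case \textbf{(c)}, and with it the theorem, remains unproved in your write-up.
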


\begin{proof}[Proof of Theorem \ref{Main}]
Take two centered valuations $\nu,\mu:R\lra \R_\infty$ such that $\nu(\MI)=\mu(\MI)=1$. Since $R^{(0)}_\nu=R=R^{(0)}_\mu$ and $1=\nu(\MI)=m^{(0)}_\nu=m^{(0)}_\mu$ we can define
\[
s=\max\{i\mid R^{(i)}_\nu=R^{(i)}_\mu\text{ and }m^{(i)}_\nu=m^{(i)}_\mu\}.
\]
If $s=\infty$ then $\VR_{\textrm{krull}[\nu]}=\VR_{\textrm{krull}[\mu]}$ and consequently $\nu\sim\mu$. Since these valuations are normalized by $\nu(\MI)=1=\mu(\MI)$ we must have that $\nu=\mu$ and there is nothing to prove. Therefore, assume that $s<\infty$.
We define $R^{(i)}:=R^{(i)}_\nu=R^{(i)}_\mu$ and $m^{(i)}:=m^{(i)}_\nu=m^{(i)}_\mu$ for $0\leq i\leq s$.

We will divide our proof in cases, starting with the case where $R^{(s+1)}_\nu\neq R^{(s+1)}_\mu$. By Corollary \ref{Cor_1} there exists $x^{(s)}\in\MI^{(s)}$ such that
\[
\nu^{(s)}\left(x^{(s)}\right)=\nu^{(s)}\left(\MI^{(s)}\right)=\mu^{(s)}\left(\MI^{(s)}\right)=\mu^{(s)}\left(x^{(s)}\right).
\]
Take any $y^{(s)}\in\MI^{(s)}$ such that $\left(x^{(s)},y^{(s)}\right)$ is a regular system of parameters for $\MI^{(s)}$. Define $\omega^{(s)}$ to be the monomial valuation on $R^{(s)}$ defined by
\[
\omega^{(s)}\left(x^{(s)}\right)=\nu^{(s)}\left(x^{(s)}\right)=\mu^{(s)}\left(x^{(s)}\right)\text{ and }\omega^{(s)}\left(y^{(s)}\right)=\min\{\nu\left(y^{(s)}\right),\mu\left(y^{(s)}\right)\}.
\]

Let $\omega$ be the restriction of $\omega^{(s)}$ to $R$. From the definition of monomial valuation, we conclude that $\omega\leq \nu$ and $\omega\leq\mu$. We want to prove that if $\omega'$ is a valuation of $R$ such that $\omega\leq\omega'\leq\nu$ and $\omega\leq\omega'\leq\mu$, then $\omega=\omega'$.

If $\dim\left(R^{(s+1)}_{\omega'}\right)=1$, applying Theorem \ref{Lem_1} \textbf{(a)} for $\omega\leq \omega'$ we have that $\omega=\omega'$. If $\dim \left(R^{(s+1)}_{\omega'}\right)=2$ then $\dim \left(R^{(s+1)}_{\nu}\right)=2$ and $\dim \left(R^{(s+1)}_{\mu}\right)=2$. Moreover, applying Theorem \ref{Lem_1} \textbf{(c)} for $\omega'\leq \nu$ and $\omega'\leq \mu$ we have that $R^{(s+1)}_\mu=R^{(s+1)}_{\omega'}$ and $R^{(s+1)}_\nu=R^{(s+1)}_{\omega'}$. Consequently, $R^{(s+1)}_\nu=R^{(s+1)}_\mu$, which is a contradiction with our assumption. Therefore, $\dim\left(R^{(s+1)}_{\omega'}\right)=1$ and $\omega=\omega'$.

The remaining case is if $R^{(s+1)}_\nu= R^{(s+1)}_\mu=:R^{(s+1)}$ and $m_\nu^{(s+1)}\neq m_\mu^{(s+1)}$, say $m_\nu^{(s+1)}< m_\mu^{(s+1)}$. Define the valuation $\omega^{(s+1)}$ in $R^{(s+1)}_\nu$ to be the monomial valuation given by
\[
\omega^{(s+1)}\left(x^{(s+1)}\right)=\min\left\{\nu^{(s+1)}\left(x^{(s+1)}\right),\mu^{(s+1)}\left(x^{(s+1)}\right)\right\}
\]
and
\[
\omega^{(s+1)}\left(y^{(s+1)}\right)=\min\left\{\nu^{(s+1)}\left(y^{(s+1)}\right),\mu^{(s+1)}\left(y^{(s+1)}\right)\right\},
\]
where $\left(x^{(s+1)},y^{(s+1)}\right)$ is a regular system of parameters for $R^{(s+1)}$ with the property that $\nu^{(s+1)}\left(x^{(s+1)}\right)=\nu^{(s+1)}\left(\MI^{(s+1)}\right)$ and $\mu^{(s+1)}\left(x^{(s+1)}\right)=\mu^{(s+1)}\left(\MI^{(s+1)}\right)$ (such $x^{(s+1)}$ exists by Corollary \ref{Cor_1}). Let $\omega$ be the restriction of $\omega^{(s+1)}$ to $R$. Take a valuation $\omega'$ in $R$ such that $\omega\leq\omega'\leq \nu$ and $\omega\leq\omega'\leq\mu$. We want to prove that $\omega=\omega'$.

From our definition, we get that $m_\omega^{(s+1)}=m_\nu^{(s+1)}<m_\mu^{(s+1)}$. If $m_{\omega'}^{(s+1)}>m_\omega^{(s+1)}$ then we would have that $\omega'\nleq \nu$ which is a contradiction. Thus $m_{\omega'}^{(s+1)}=m_\omega^{(s+1)}$. Since $\omega'\leq\mu$ and $m_\mu^{(s+1)}>m_{\omega'}^{(s+1)}$ we are in the situation of the Theorem \ref{Lem_1}, so by \textbf{(c)} we have that $\omega'^{(s+1)}$ is monomial (on $\left(x^{(s+1)},y^{(s+1)}\right)$). Therefore, $\omega'^{(s+1)}=\omega^{(s+1)}$ and consequently $\omega=\omega'$.
\end{proof}
\begin{Obs}
In the proof, we used the fact that in the situation above, the valuation $\omega'^{(s+1)}$ is a monomial valuation on the coordinates $\left(x^{(s+1)},y^{(s+1)}\right)$. This fact was not explicitly stated but appears in the proof of Theorem \ref{Lem_1} in \cite{Gra}.
\end{Obs}

\section{Comparison of topologies}\label{Top}

In this section we compare the topologies defined above with classical topologies. Also, we compare the weak tree topology and the metric topology given by a parametrization of a rooted non-metric tree.

The most well known topology on space of Krull valuations is the Zariski Topology:

\begin{Def}
The Zariski topology on $\mathcal{V}$ is the topology generated by the sets of the form
\[
\{[\nu]\in \mathcal{V}\mid \phi\in \mathcal{O}_\nu\}
\]
where $\phi\in K$.
\end{Def}

\begin{Obs}
It is proved in \cite{Zar} that this topology is compact (quasi-compact) but not Hausdorff.
\end{Obs}

The coarsest Hausdorff topology which is finer than the Zariski topology on $\mathcal{V}$ is the Patch-Zariski topology:
\begin{Def}
The Patch-Zariski topology on $\mathcal{V}$ is defined to be the topology generated by the sets of form
\[
\{[\nu]\in \mathcal{V}\mid \phi\in \mathcal{O}_\nu\}
\]
and
\[
\{[\nu]\in \mathcal{V}\mid \psi\in \MI_\nu\}
\]
where $\phi,\psi\in K$.
\end{Def}

\begin{Obs}
It is proved in \cite{Kuh_1} that the Zariski topology is spectral and that the patch topology associated to it is indeed the Patch-Zariski topology defined above.
\end{Obs}

We will describe below an approach used by Berkovich in \cite{Ber} and by Favre in \cite{Fav_1} and \cite{Fav_2} to define topologies on sets of valuations.

\begin{Def}
Consider the subset $\mathcal W_\MI$ of $\widetilde{\mathcal{W}}_\R$ consisting of all valuations normalized by $1$. The set $\mathcal W_\MI$ is called the valuative tree of $R$. We define the weak topology on $\mathcal W_\MI$ to be the topology generated by the sets of the form
\[
\{\nu\in \mathcal W_\MI\mid \nu(\phi)>\alpha\}\textnormal{ and }\{\nu\in \mathcal W_\MI\mid \nu(\phi)<\alpha\}
\]
where $\alpha\in \R$ and $\phi\in R$.
\end{Def}

\begin{Obs}
It is easy to see that $\mathcal W_\MI\subseteq \left(\R_\infty\right)^{R}$ and that the topology defined above is the topology on $\mathcal W_\MI$ induced by the product topology on $\left(\R_\infty\right)^{R}$ where $\R_\infty$ is regarded with the order topology.
\end{Obs}

An interesting fact, proved in \cite{Fav_1} (Theorem 5.1) is the following:
\begin{Prop}
The weak tree topology and the weak topology in $\mathcal W_\MI$ are the same.
\end{Prop}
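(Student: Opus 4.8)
The plan is to compare the two topologies at the level of subbases. The weak topology on $\mathcal W_\MI$ is generated by the sets $U^+_{\phi,\alpha}=\{\nu\in\mathcal W_\MI\mid\nu(\phi)>\alpha\}$ and $U^-_{\phi,\alpha}=\{\nu\in\mathcal W_\MI\mid\nu(\phi)<\alpha\}$, with $\phi\in R$ and $\alpha\in\R$, while the weak tree topology is generated by the tangent vectors $[\sigma]_\tau$ at points $\tau\in\mathcal W_\MI$ (Definition \ref{Def_1}); here I use that, by Theorem \ref{Main} together with \cite{Fav_1} and \cite{Gra}, the set $\mathcal W_\MI$ ordered by $\nu\le\mu\Llr\nu(\phi)\le\mu(\phi)$ for all $\phi\in R$ is a rooted non-metric tree with root $\nu_\MI$. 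So it is enough to prove that each subbasic set of one topology is open in the other. A preliminary observation is the following description of the tangent vectors, read off from the order: $[\sigma]_\tau=\{\mu\mid\mu\not\ge\tau\}$ whenever $\sigma\not>\tau$, and $[\sigma]_\tau=\{\mu\mid\mu\wedge\sigma>\tau\}$ whenever $\sigma>\tau$.

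For ``weak tree $\subseteq$ weak'': a tangent vector of the first kind satisfies $\{\mu\mid\mu\not\ge\tau\}=\bigcup_{\phi\in R}\{\mu\mid\mu(\phi)<\tau(\phi)\}=\bigcup_{\phi\in R}U^-_{\phi,\tau(\phi)}$, hence is weak-open. For a tangent vector $[\sigma]_\tau$ with $\tau<\sigma$ the claim to establish is that there exists $\phi\in R$ with $\tau(\phi)<\sigma(\phi)$ such that $[\sigma]_\tau=U^+_{\phi,\tau(\phi)}$; that is, a valuation lies in the tangent direction of $\sigma$ at $\tau$ exactly when it strictly raises the value of $\phi$. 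If $\phi$ is chosen so that $\mu\mapsto\mu(\phi)$ is strictly increasing along $[\tau,\sigma]$, then $\mu\wedge\sigma>\tau$ yields $\mu(\phi)\ge(\mu\wedge\sigma)(\phi)>\tau(\phi)$, so $[\sigma]_\tau\subseteq U^+_{\phi,\tau(\phi)}$; the reverse inclusion uses in addition that $\mu(\phi)=(\mu\wedge\sigma)(\phi)$ whenever $\mu\wedge\sigma\le\tau$, which rules out $\mu\wedge\sigma\le\tau$ once $\mu(\phi)>\tau(\phi)$. Exhibiting such a generator $\phi$ of the tangent direction at $\tau$ is precisely what the sequence of key polynomials of $\sigma$ in \cite{Fav_1}, and the description of $\sigma$ via its sequence of quadratic dilatations in \cite{Gra}, make available — for instance one may take $\phi$ to be a regular parameter at the stage $R^{(i)}_\sigma$ of the dilatation tower at which $\sigma$ first separates from $\tau$. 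This step, in which the order-theoretic statement must be translated back into the arithmetic of the valuations, is the one I expect to be the main obstacle.

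For ``weak $\subseteq$ weak tree'' I would handle $U^+$ and $U^-$ separately, relying on the fact that $\mu\mapsto\mu(\phi)$ is monotone and continuous along each branch $[\nu_\MI,\mu]$ — again a feature of the value functions supplied by the structure theory. For $U^+_{\phi,\alpha}$: if $\alpha<\mathrm{ord}_\MI(\phi)$ then $U^+_{\phi,\alpha}=\mathcal W_\MI$; otherwise, given $\mu$ with $\mu(\phi)>\alpha$, let $\tau$ be the largest point of $[\nu_\MI,\mu]$ with $\tau(\phi)\le\alpha$. Then $\tau<\mu$, $\tau(\phi)=\alpha$, and every point of $[\nu_\MI,\mu]$ strictly above $\tau$ has $\phi$-value $>\alpha$; choosing any $\tau'$ with $\tau<\tau'\le\mu$, one gets $\mu\in[\tau']_\tau\subseteq U^+_{\phi,\alpha}$, since any $\mu'$ with $\mu'\wedge\tau'>\tau$ has $\mu'\wedge\tau'$ lying on $[\nu_\MI,\mu]$ above $\tau$, whence $\mu'(\phi)\ge(\mu'\wedge\tau')(\phi)>\alpha$. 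For $U^-_{\phi,\alpha}$ I would show that its complement $\{\mu\mid\mu(\phi)\ge\alpha\}$ is a \emph{finite} union of principal up-sets $\{\mu\mid\mu\ge\tau_1\},\dots,\{\mu\mid\mu\ge\tau_N\}$: writing $\phi=u\phi_1\cdots\phi_k$ with the $\phi_j$ irreducible, each $\mu(\phi_j)$ increases only along the single branch of $\mathcal W_\MI$ leading to the curve valuation of $\phi_j$, so $\{\mu\mid\sum_j\mu(\phi_j)\ge\alpha\}$ has only finitely many minimal elements. Then $U^-_{\phi,\alpha}=\bigcap_{i=1}^N\{\mu\mid\mu\not\ge\tau_i\}$ is a finite intersection of tangent vectors of the first kind, hence tree-open.

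Finally I would point out that every structural fact used above — the existence of a generator $\phi$ of a tangent direction, the monotonicity and continuity of $\mu\mapsto\mu(\phi)$ along branches, and the finiteness of the set of minimal elements of $\{\nu(\phi)\ge\alpha\}$ — is established, for $R=\C[[x,y]]$, in \cite{Fav_1} via key polynomials, and, for an arbitrary two-dimensional regular local ring, in \cite{Gra} via quadratic dilatations. So the proof amounts to organizing these inputs into the two subbasis comparisons, rather than to any new valuation-theoretic argument.
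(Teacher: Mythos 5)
First, a point of comparison: the paper does not prove this Proposition at all --- it is quoted as a known result, with the proof attributed to \cite{Fav_1} (Theorem 5.1). So there is no in-paper argument to measure your proposal against; what you have written is an attempted reconstruction of the cited proof, and it should be judged as such.

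As a reconstruction, your architecture is right and matches the standard argument: compare the topologies subbasis by subbasis, and use the order-theoretic description of tangent vectors, $[\sigma]_\tau=\{\mu\mid \mu\not\geq\tau\}$ when $\sigma\not>\tau$ and $[\sigma]_\tau=\{\mu\mid \mu\wedge\sigma>\tau\}$ when $\sigma>\tau$, both of which follow from \textbf{(T1)}--\textbf{(T4)} alone. The genuine gap is that every valuation-theoretic step is left as a black box pointing back to \cite{Fav_1} and \cite{Gra}: (i) the existence, for each direction $[\sigma]_\tau$ with $\sigma>\tau$, of an element $\phi\in R$ with $[\sigma]_\tau=\{\mu\mid\mu(\phi)>\tau(\phi)\}$; (ii) the locality property $\mu(\phi)=(\mu\wedge\sigma)(\phi)$ when $\mu\wedge\sigma\leq\tau$, together with continuity of $\mu\mapsto\mu(\phi)$ along segments; and (iii) the claim that $\{\mu\mid\mu(\phi)\geq\alpha\}$ is a finite union of principal up-sets. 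These three facts are precisely the content of Favre--Jonsson's Theorem 5.1 and the key-polynomial (resp.\ quadratic-dilatation) machinery behind it, and you flag (i) yourself as ``the main obstacle''; so as a standalone proof the proposal is incomplete at its core, even though nothing in it is wrong. Two smaller remarks: in the first-kind case $\tau(\phi)$ may equal $\infty$ (when $\phi\in\mathfrak{p}_\tau$), so $U^-_{\phi,\tau(\phi)}$ is not literally a subbasic set and must be replaced by a union over real thresholds; and the existence of the largest point of $[\nu_\MI,\mu]$ with $\phi$-value $\leq\alpha$ does not need the structure theory you invoke, since the pointwise supremum of a chain of normalized valuations is again a normalized valuation and is the least upper bound in the order, so its $\phi$-value is the supremum of the values along the chain.
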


We will proceed with the proof of Theorem \ref{Comp}.
\begin{proof}[Proof of Theorem \ref{Comp}]
It is enough to show that every subbasic set $[\sigma]_{\tau}$ in the weak tree topology is open in the metric topology, i.e., for every $\gamma\in[\sigma]_{\tau}$ there exist $\epsilon>0$ such that
\[
B_\epsilon(\gamma)=\{\alpha\in \mathcal T\mid d_\Psi(\gamma,\alpha)<\epsilon\}\subseteq [\sigma]_{\tau}.
\]

By definition $\gamma\neq\tau$ so $\epsilon:=d_\Psi(\gamma,\tau)>0$. Let's prove that $B_\epsilon(\gamma)\subseteq [\sigma]_{\tau}$.
\begin{Claim}\label{Claim_1}
$\alpha\notin [\sigma]_{\tau}\Llr \tau\in\left[\alpha,\sigma\right]$.
\end{Claim}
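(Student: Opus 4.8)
\textbf{Proof plan for Claim \ref{Claim_1}.}
The plan is to unwind the definition of the closed segment $[\sigma,\tau]$ (Definition \ref{Def_1} item \textbf{(ii)}) and the definition of the tangent vector $[\sigma]_\tau$ (item \textbf{(iii)}) and show the two conditions are literally complementary characterizations of the position of $\alpha$ relative to the ``fork'' at $\tau$. Throughout I work inside $I_\sigma = \{\beta \mid \beta \le \sigma\}$ and $I_\tau = \{\beta \mid \beta \le \tau\}$, which by \textbf{(T2)} are order-isomorphic to real intervals, and I use freely that $\tau \wedge \alpha$ and $\tau \wedge \sigma$ exist by \textbf{(T4')}. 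A useful first observation, which I would state and prove as a one-line sub-step, is that $\alpha \sim_\tau \sigma$ is equivalent to $\tau \wedge \alpha > \tau \wedge \sigma$: indeed if $\rho \in \,]\tau,\alpha]\cap\,]\tau,\sigma]$ then $\tau < \rho \le \alpha$ and $\tau < \rho \le \sigma$, so $\rho \le \tau\wedge\alpha$ (it is a lower bound of $\{\tau,\alpha\}$ that is $>\tau$) and likewise $\rho \le \tau\wedge\sigma$ — wait, that gives both are $\ge \rho > \tau$; the sharper statement is that $\tau\wedge\alpha$ and $\tau\wedge\sigma$ are then comparable (both lie in the totally ordered set $I_\alpha$... ) and one shows $\tau\wedge\alpha = \tau\wedge\sigma$ is impossible only in the negative direction, so I will instead phrase the key lemma as: $\alpha \not\sim_\tau \sigma$ iff every $\rho$ with $\tau < \rho \le \alpha$ satisfies $\rho \not\le \sigma$, iff $\tau \wedge \alpha \le \tau$, iff $\tau\wedge\alpha \wedge \sigma \le \tau$.

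For the direction ($\Leftarrow$): suppose $\tau \in [\alpha,\sigma]$, i.e. $\tau\wedge\sigma' \le \tau \le \alpha$ or $\tau\wedge' \le \tau \le \sigma$ where I abbreviate the relevant meet; since $\tau \le \sigma$ already fails unless $\tau = \tau\wedge\sigma$, the membership $\tau \in [\alpha,\sigma]$ forces $\alpha\wedge\sigma \le \tau \le \alpha$. I then need $\alpha \not\sim_\tau \sigma$, i.e. $]\tau,\alpha] \cap\, ]\tau,\sigma] = \emptyset$. If some $\rho$ were in the intersection, $\tau < \rho \le \alpha$ and $\rho \le \sigma$, so $\rho \le \alpha\wedge\sigma \le \tau$, contradicting $\rho > \tau$. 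Hence $\alpha \notin [\sigma]_\tau$ (noting $\alpha \ne \tau$ since $\tau < $ nothing... here one must also check $\alpha = \tau$ is excluded: if $\tau \in [\alpha,\sigma]$ and $\alpha = \tau$ then $\tau \in [\sigma]_\tau$ is vacuous since $[\sigma]_\tau \subseteq \mathcal T\setminus\{\tau\}$, so the statement $\alpha \notin [\sigma]_\tau$ still holds).

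For the direction ($\Rightarrow$): suppose $\alpha \notin [\sigma]_\tau$. Either $\alpha = \tau$, in which case $\tau = \alpha \le \alpha$ and $\alpha\wedge\sigma = \tau$, so $\tau \in [\alpha,\sigma]$ trivially; or $\alpha \ne \tau$ and $\alpha \not\sim_\tau \sigma$, i.e. $]\tau,\alpha]\cap\,]\tau,\sigma] = \emptyset$. I want $\tau \in [\alpha,\sigma]$, and since $\alpha\wedge\sigma \le \alpha$ always, it suffices to show $\alpha\wedge\sigma \le \tau$; given that and $\tau \le \alpha$, which I get because $\tau$ and $\alpha\wedge\sigma$ both lie below $\alpha$ in the totally ordered $I_\alpha$ while ... — the cleanest route is: $\alpha \wedge \sigma \le \alpha$ and $\alpha\wedge\sigma \le \sigma$; I claim $\tau$ is comparable to $\alpha\wedge\sigma$. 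Both $\tau\wedge\alpha$ and $\alpha\wedge\sigma$ are $\le \alpha$, hence lie in the real interval $I_\alpha$ and are comparable. If $\alpha\wedge\sigma > \tau\wedge\alpha$, then $\alpha\wedge\sigma \in\, ]\tau\wedge\alpha,\alpha]$; since $\tau \le \alpha$, either $\tau < \alpha\wedge\sigma$ (then $\alpha\wedge\sigma \in\,]\tau,\alpha]\cap\,]\tau,\sigma]$, contradicting emptiness) or $\tau \ge \alpha\wedge\sigma$, giving exactly $\alpha\wedge\sigma \le \tau \le \alpha$, hence $\tau\in[\alpha,\sigma]$. If instead $\alpha\wedge\sigma \le \tau\wedge\alpha \le \tau$ (using $\tau\wedge\alpha \le \tau$), again $\alpha\wedge\sigma \le \tau \le \alpha$. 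Either way $\tau \in [\alpha,\sigma]$.

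The main obstacle is purely bookkeeping: the definition of $[\tau,\sigma]$ is a disjunction over which of $\tau,\sigma$ one travels toward, and one must repeatedly invoke that any two elements lying below a common element are comparable (a consequence of \textbf{(T2)}) to collapse these case splits and to legitimately compare $\tau$, $\tau\wedge\alpha$, and $\alpha\wedge\sigma$. I anticipate no genuine difficulty beyond being careful that $\tau\wedge\sigma$, $\tau\wedge\alpha$, $\alpha\wedge\sigma$ are each realized as honest infima (via \textbf{(T4')}) so that inequalities like ``$\rho$ a common lower bound $\Rightarrow \rho \le \alpha\wedge\sigma$'' are valid, and that the edge case $\alpha = \tau$ is handled on both sides.
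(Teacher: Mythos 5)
Your backward direction is essentially sound except for one faulty reduction: $\tau\in[\alpha,\sigma]$ does \emph{not} force $\alpha\wedge\sigma\le\tau\le\alpha$; the other disjunct $\alpha\wedge\sigma\le\tau\le\sigma$ can hold with $\tau\not\le\alpha$ (take $\alpha\wedge\sigma=\alpha\wedge\tau<\tau<\sigma$ with $\alpha$ incomparable to $\tau$), and your sentence ``since $\tau\le\sigma$ already fails unless $\tau=\tau\wedge\sigma$'' does not rule it out. Luckily the dropped case is closed by the same one-line computation with the roles of $\alpha$ and $\sigma$ exchanged (any $\rho$ in $\,]\tau,\alpha]\cap\,]\tau,\sigma]$ satisfies $\rho>\tau$, $\rho\le\alpha$, $\rho\le\sigma$, hence $\rho\le\alpha\wedge\sigma\le\tau$, a contradiction), so this half is only a bookkeeping slip; note also that reading $\,]\tau,\alpha]$ as $\{\rho\mid\tau<\rho\le\alpha\}$ is legitimate only when $\tau\le\alpha$ — in general $\,]\tau,\alpha]$ also contains the points of $[\tau\wedge\alpha,\tau[$, which you must discard using $\rho>\tau$. (Your preliminary ``key lemma'' in the first paragraph is incorrect as stated — conditions like $\tau\wedge\alpha\le\tau$ hold always — but since you abandon it, it does no harm.)

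The genuine gap is in the forward direction. Your argument hinges on the phrase ``since $\tau\le\alpha$'': it is what makes $\tau$ comparable with $\alpha\wedge\sigma$ inside $I_\alpha$, and it is what turns $\alpha\wedge\sigma\le\tau$ into membership in $[\alpha,\sigma]$. But $\tau\le\alpha$ is neither a hypothesis nor true in general: in the example above one has $\alpha\notin[\sigma]_\tau$ and $\tau\in[\alpha,\sigma]$ (via $\tau\le\sigma$), yet $\tau\not\le\alpha$, so your concluding line ``again $\alpha\wedge\sigma\le\tau\le\alpha$'' derives a false statement. Membership $\tau\in[\alpha,\sigma]$ requires $\alpha\wedge\sigma\le\tau$ \emph{together with} the disjunction ``$\tau\le\alpha$ or $\tau\le\sigma$'', and you never prove that disjunction; this is precisely the step the paper's proof supplies first: if $\tau$ lay below neither $\alpha$ nor $\sigma$, then $\tau\wedge\alpha$ and $\tau\wedge\sigma$ are both $<\tau$ and comparable (both lie in $I_\tau$), so any point of $[\max\{\tau\wedge\alpha,\tau\wedge\sigma\},\tau[$ lies in $\,]\tau,\alpha]\cap\,]\tau,\sigma]$, contradicting $\alpha\not\sim_\tau\sigma$. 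With ``$\tau\le\alpha$ or $\tau\le\sigma$'' in hand, your comparability argument (run inside $I_\alpha$ or $I_\sigma$ accordingly) does give $\alpha\wedge\sigma\le\tau$ exactly as in the paper, and the claim follows; as written, however, the forward implication is not proved.
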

\begin{proof}

Suppose first that $\alpha\notin [\sigma]_{\tau}$. This means that $\left]\tau,\alpha\right]\cap\left]\tau,\sigma\right]=\emptyset$. Suppose towards a contradiction that $\tau>\alpha,\sigma$. Then we would have that $\alpha$ and $\sigma$ are comparable, say $\alpha\leq\sigma$. This means that $\left]\tau,\sigma\right]\subseteq\left]\tau,\alpha\right]$ which is a contradiction. Consequently, $\tau<\sigma$ or $\tau<\alpha$. It remains to show that $\tau\geq\alpha\wedge\sigma$. Suppose not, i.e., that $\tau<\alpha\wedge\sigma$. Then we would have that $\alpha\wedge\sigma\in\left]\tau,\alpha\right]\cap\left]\tau,\sigma\right]$ which is again a contradiction.

For the converse, assume that $\tau\in\left[\alpha,\sigma\right]$. If $\tau=\alpha\wedge\sigma$ then we get by definition of $\alpha\wedge\sigma$ that $\left]\tau,\alpha\right]\cap\left]\tau,\sigma\right]=\emptyset$. Otherwise, assume w.l.o.g. that $\alpha\wedge\sigma<\tau<\alpha$. Then
\[
\left]\tau,\alpha\right]=\{\gamma\in\mathcal{T}\mid\tau<\gamma\leq\alpha\}
\]
and
\[
\left]\tau,\sigma\right]=\{\gamma\in\mathcal{T}\mid(\alpha\wedge\sigma\leq\gamma<\tau)\vee(\alpha\wedge\sigma\leq\gamma\leq\sigma)\}
\]
which are disjoint sets. Therefore, $\alpha\nsim_\tau \sigma$, so $\alpha \notin[\sigma]_{\tau}$.
\end{proof}
\begin{Claim}\label{Claim_2}
If $\tau\in\left[\alpha,\sigma\right]$ then $d_\Psi(\alpha,\sigma)=d_\Psi(\alpha,\tau)+d_\Psi(\tau,\sigma)$
\end{Claim}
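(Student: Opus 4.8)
The plan is to use the hypothesis $\tau\in[\alpha,\sigma]$ to reduce to a computation in a single totally ordered segment, namely the segment from $\alpha\wedge\sigma$ upward, where the parametrization $\Psi$ is an order isomorphism onto a real interval. First I would unpack the definition of $[\alpha,\sigma]$: the condition $\tau\in[\alpha,\sigma]$ means $\alpha\wedge\sigma\leq\tau\leq\alpha$ or $\alpha\wedge\sigma\leq\tau\leq\sigma$. I would show that in fact this forces $\alpha\wedge\tau=\alpha\wedge\sigma$ and $\sigma\wedge\tau=\alpha\wedge\sigma$ (equivalently, $\alpha\wedge\sigma=\min\{\alpha\wedge\tau,\sigma\wedge\tau\}$ and $\tau$ dominates $\alpha\wedge\sigma$). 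Indeed, if say $\alpha\wedge\sigma\leq\tau\leq\alpha$, then $\alpha\wedge\tau=\tau$ (since $\tau\le\alpha$ and $\tau$ is comparable to everything below it by \textbf{(T2)}), and $\sigma\wedge\tau$: since both $\sigma$ and $\tau$ lie above $\alpha\wedge\sigma$, and $I_\sigma\cap I_\tau$ is a totally ordered set (initial segment), its supremum $\sigma\wedge\tau$ is $\ge\alpha\wedge\sigma$; conversely $\sigma\wedge\tau\le\sigma$ and $\sigma\wedge\tau\le\tau\le\alpha$, so $\sigma\wedge\tau\le\alpha\wedge\sigma$, giving $\sigma\wedge\tau=\alpha\wedge\sigma$.

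With those three infima identified, the proof is a direct substitution into the formula in Definition \ref{Def_1}(vi). Writing $\rho:=\alpha\wedge\sigma$, we have (in the case $\rho\le\tau\le\alpha$) that $\alpha\wedge\sigma=\rho$, $\tau\wedge\sigma=\rho$, and $\tau\wedge\alpha=\tau$, so
\[
d_\Psi(\alpha,\sigma)=\Big(\tfrac{1}{\Psi(\rho)}-\tfrac{1}{\Psi(\alpha)}\Big)+\Big(\tfrac{1}{\Psi(\rho)}-\tfrac{1}{\Psi(\sigma)}\Big),
\]
\[
d_\Psi(\alpha,\tau)=\Big(\tfrac{1}{\Psi(\tau)}-\tfrac{1}{\Psi(\alpha)}\Big)+\Big(\tfrac{1}{\Psi(\tau)}-\tfrac{1}{\Psi(\tau)}\Big)=\tfrac{1}{\Psi(\tau)}-\tfrac{1}{\Psi(\alpha)},
\]
\[
d_\Psi(\tau,\sigma)=\Big(\tfrac{1}{\Psi(\rho)}-\tfrac{1}{\Psi(\tau)}\Big)+\Big(\tfrac{1}{\Psi(\rho)}-\tfrac{1}{\Psi(\sigma)}\Big),
\]
and adding the last two gives exactly the first. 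The case $\rho\le\tau\le\sigma$ is symmetric (swap the roles of $\alpha$ and $\sigma$). The degenerate case $\tau=\rho=\alpha\wedge\sigma$ is covered by either computation, and the cases $\tau=\alpha$ or $\tau=\sigma$ make one of the two summands vanish, consistently.

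The main obstacle, such as it is, is the bookkeeping in the first paragraph: one must be careful that $\tau\in[\alpha,\sigma]$ genuinely pins down all three pairwise infima, using only \textbf{(T1)}, \textbf{(T2)} and the existence of infima \textbf{(T4)} — in particular the fact that any two elements below a common element are comparable, and that $\sigma\wedge\tau=\sup(I_\sigma\cap I_\tau)$. Once the identities $\tau\wedge\sigma=\alpha\wedge\sigma$ and $\tau\wedge\alpha=\tau$ (in the relevant case) are established, the remainder is the elementary algebraic cancellation shown above, and no property of $\Psi$ beyond being a well-defined map into $[1,\infty]$ is needed. I would also remark that this additivity is precisely what makes $d_\Psi$ a genuine metric (the triangle inequality on a tree degenerating to an equality along segments), so it is worth recording cleanly.
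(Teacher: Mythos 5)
Your proof is correct and follows essentially the same route as the paper: identify the three pairwise infima forced by $\tau\in[\alpha,\sigma]$ (namely $\alpha\wedge\tau=\tau$ and $\tau\wedge\sigma=\alpha\wedge\sigma$ in your case, the mirror identities in the paper's w.l.o.g.\ case) and then substitute into the defining formula for $d_\Psi$, where the sum telescopes. Your extra paragraph justifying the infimum identities via the lower-bound argument is a sound addition that the paper leaves implicit.
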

\begin{proof}
Suppose w.l.o.g that $\alpha\wedge\sigma\leq \tau\leq \sigma$. Then we have that $\alpha\wedge\tau=\alpha\wedge\sigma$ and that $\tau\wedge\sigma=\tau$. Therefore,
\begin{displaymath}
\begin{array}{rcl}
d_\Psi(\alpha,\sigma) & = & \displaystyle\left(\frac{1}{\Psi(\alpha\wedge\sigma)}-\frac{1}{\Psi(\alpha)}\right)+\left(\frac{1}{\Psi(\alpha\wedge\sigma)}-\frac{1}{\Psi(\sigma)}\right)\\
                              & = & \displaystyle\left(\frac{1}{\Psi(\alpha\wedge\tau)}-\frac{1}{\Psi(\alpha)}\right)+\left(\frac{1}{\Psi(\alpha\wedge\tau)}-\frac{1}{\Psi(\sigma)}\right)+\left(\frac{2}{\Psi(\tau\wedge\sigma)}-\frac{2}{\Psi(\tau)}\right)\\
                              & = & \displaystyle\left(\frac{1}{\Psi(\alpha\wedge\tau)}-\frac{1}{\Psi(\alpha)}\right)+\left(\frac{1}{\Psi(\alpha\wedge\tau)}-\frac{1}{\Psi(\tau)}\right)\\
                              &   & + \displaystyle\left(\frac{1}{\Psi(\tau\wedge\sigma)}-\frac{1}{\Psi(\sigma)}\right)+\left(\frac{1}{\Psi(\tau\wedge\sigma)}-\frac{1}{\Psi(\tau)}\right)\\\\
                              & = & d_\Psi(\alpha,\tau)+d_\Psi(\tau,\sigma).
                              
\end{array}
\end{displaymath}
\end{proof}

Take an element $\alpha\notin[\sigma]_{\tau}=[\gamma]_{\tau}$. By Claim \ref{Claim_1} we have that $\tau\in\left[\alpha,\gamma\right]$. By Claim \ref{Claim_2} we have that
\[
d_\Psi(\alpha,\gamma)=d_\Psi(\alpha,\tau)+d_\Psi(\tau,\gamma)=d_\Psi(\alpha,\tau)+\epsilon\geq\epsilon.
\]
Therefore, $\alpha\notin B_\epsilon(\gamma)$ and consequently, $B_\epsilon(\gamma)\subseteq [\sigma]_{\tau}$.
\end{proof}

We now analyse if these topologies are the same:

\begin{Teo}\label{Teo_1}
If there is an element $\sigma\in\mathcal{T}$ with uncountably many branches $(|\mathcal{T}_{\sigma}|>|\N|)$ then the weak tree topology is not first countable. In particular, the metric topology given by any parametrization is strictly coarser than the weak tree topology.
\end{Teo}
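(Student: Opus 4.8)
The plan is to prove the first and stronger assertion directly: the weak tree topology has no countable neighbourhood basis at a point $\sigma$ with $|\mathcal T_\sigma|>|\N|$. The second assertion is then immediate, since any topology induced by a metric is first countable while, by Theorem~\ref{Comp}, the weak tree topology is contained in the $d_\Psi$-topology; hence the two cannot coincide, so the $d_\Psi$-topology is strictly finer than the weak tree topology.

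First I would pin down the shape of the subbasic open sets near $\sigma$. Using the description of $\sim_\tau$ in Definition~\ref{Def_1}\textbf{(iii)} together with the convexity of segments coming from \textbf{(T2)}, one checks that for any $\tau\neq\sigma$ the tangent vector $[\sigma]_\tau$ is of one of two kinds: if $\sigma\not\geq\tau$ (in particular if $\tau>\sigma$), then $[\sigma]_\tau=\{\alpha\in\mathcal T\mid\alpha\not\geq\tau\}$; and if $\tau<\sigma$, then $[\sigma]_\tau\supseteq\{\alpha\in\mathcal T\mid\alpha\geq\sigma\}$. In particular $[\sigma]_\tau$ is an open neighbourhood of $\sigma$ whenever $\tau>\sigma$. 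Since at most one tangent vector at $\sigma$ is the ``downward'' one $\{\alpha\mid\alpha\not\geq\sigma\}$, the hypothesis $|\mathcal T_\sigma|>|\N|$ forces uncountably many tangent vectors \emph{above} $\sigma$; I fix once and for all a point $\tau_{\vec v}>\sigma$ in each such branch $\vec v$, and recall that two points lying strictly above $\sigma$ in distinct branches are mutually incomparable (if comparable, the segment joining them to $\sigma$ would put them in the same branch).

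Now suppose, for contradiction, that $\{B_n\}_{n\in\N}$ is a countable neighbourhood basis at $\sigma$. For each $n$ choose finitely many points $\tau_{n,1},\dots,\tau_{n,k_n}$ with $\sigma\in\bigcap_{i=1}^{k_n}[\sigma]_{\tau_{n,i}}\subseteq B_n$ (every subbasic open containing $\sigma$ is of the form $[\sigma]_{\tau}$ for its base point $\tau$). From the two kinds above, for a point $\alpha\geq\sigma$ one has $\alpha\in\bigcap_i[\sigma]_{\tau_{n,i}}$ if and only if $\alpha\not\geq\tau_{n,i}$ for every $i$ with $\tau_{n,i}>\sigma$ — the indices with $\tau_{n,i}<\sigma$ or $\tau_{n,i}$ incomparable to $\sigma$ impose no condition on such $\alpha$. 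The countably many points $\tau_{n,i}$ that lie strictly above $\sigma$ occupy only countably many branches above $\sigma$, so I may pick a branch $\vec v$ used by none of them and set $\tau=\tau_{\vec v}>\sigma$. For every $n$, the point $\tau$ is incomparable to each $\tau_{n,i}>\sigma$, hence $\tau\not\geq\tau_{n,i}$ for all such $i$, so $\tau\in\bigcap_i[\sigma]_{\tau_{n,i}}\subseteq B_n$; but $\tau\notin[\sigma]_\tau$. Thus no $B_n$ is contained in the neighbourhood $[\sigma]_\tau$ of $\sigma$, contradicting the choice of $\{B_n\}$. Therefore the weak tree topology is not first countable, and the consequence for the metric topology follows as explained above.

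The main obstacle is the bookkeeping in the third paragraph: one has to be careful to isolate the centres $\tau_{n,i}$ lying strictly above $\sigma$ and to verify — from \textbf{(T2)} and the convexity of segments — both that such centres in distinct branches are incomparable and that the ``downward'' and ``sideways'' centres genuinely place no restriction on points $\alpha\geq\sigma$. Once the dichotomy for $[\sigma]_\tau$ is established, the cardinality count and the final deduction are routine.
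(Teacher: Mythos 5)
Your proposal is correct and takes essentially the same approach as the paper: a countable family of basic neighbourhoods of $\sigma$ can meet the branches at $\sigma$ through only countably many ``excluded'' branches, so one picks a point $\tau$ in an untouched branch and uses the tangent vector $[\sigma]_\tau$ as a neighbourhood of $\sigma$ containing no member of the family. The only difference is bookkeeping: the paper argues via the observation that each $[\sigma]_{\tau'}$ contains every branch at $\sigma$ except the one through $\tau'$, whereas you verify the same containments through an explicit order-theoretic description of the tangent vectors and restrict to branches above $\sigma$ — a harmless variation.
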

\begin{proof}
Take an element $\sigma\in\mathcal{T}$ which has uncountably many branches. Observe that $[\sigma]_{\tau}$ contains all branches emanating from $\sigma$ except for the one on which $\tau$ lies. That means that any basic open set (therefore any open set) that contains $\sigma$ contains uncountably many branches emanating from $\sigma$. Take now any family $\{V_n\}_{n\in\N}$ of open sets containing $\sigma$. Since each $V_n$ contains uncountably many branches, their intersection contains uncountably many branches. Take one of these branches and choose an element $\alpha$ on it. Take now the subbasic open set $[\sigma]_\alpha$. Then $\alpha\in V_n$ for all $n\in\N$ and $\alpha\notin[\sigma]_\alpha$, so $V_n\nsubseteq [\sigma]_\alpha$. Therefore, there is no countable system of neighbourhoods for the element $\sigma$.
\end{proof}

\begin{Cor}
In the valuative tree, the weak tree topology is strictly coarser than the topology generated by a parametrization.
\end{Cor}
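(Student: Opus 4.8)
The plan is to combine the two main results of this section. By Theorem \ref{Comp}, for any parametrization $\Psi$ of the valuative tree $\mathcal W_\MI$ the weak tree topology is coarser than or equal to the topology of the metric $d_\Psi$, so the only thing left to establish is that these two topologies are \emph{not} equal. Since every metrizable topology is first countable, Theorem \ref{Teo_1} reduces this to a single task: exhibiting one point of $\mathcal W_\MI$ that has uncountably many branches.

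The candidate I would use is the $\MI$-adic valuation $\nu_\MI$, which is the root of $\mathcal W_\MI$ (indeed $\nu_\MI(\phi)\leq\nu(\phi)$ for every normalized centered $\nu$), and the substantive part of the argument is computing its tangent space. Fixing a regular system of parameters $(x,y)$: by the Lemmas preceding Corollary \ref{Cor_1}, each centered valuation exceeds $\nu_\MI$ in at most one ``direction'' $\lambda\in\mathcal S/\!\sim$, meaning $\nu(x+\lambda y)>\nu(\MI)$ for at most one $\lambda$; conversely, for every $\lambda\in\mathcal S/\!\sim$ one realizes such a valuation, e.g. the monomial valuation (in coordinates adapted to $\lambda$) that assigns a large value to $x+\lambda y$. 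I would then check that distinct $\lambda$'s yield distinct tangent vectors at $\nu_\MI$: from the ``at most one direction'' property one deduces $\nu_\lambda\wedge\nu_{\lambda'}=\nu_\MI$ whenever $\lambda\neq\lambda'$, whence $]\nu_\MI,\nu_\lambda]\cap\,]\nu_\MI,\nu_{\lambda'}]=\emptyset$ and $\nu_\lambda\nsim_{\nu_\MI}\nu_{\lambda'}$. So the tangent space of $\mathcal W_\MI$ at $\nu_\MI$ is in bijection with $\mathcal S/\!\sim$, which is $\p^1(k)$ for $k$ the residue field of $R$; when $k$ is uncountable --- in particular for $R=\C[[x,y]]$, the original valuative tree of Favre and Jonsson --- this is uncountable.

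With such a point at hand, Theorem \ref{Teo_1} tells us the weak tree topology on $\mathcal W_\MI$ is not first countable, while the $d_\Psi$-topology is first countable because it is metrizable; hence the two topologies differ, and combined with Theorem \ref{Comp} the weak tree topology is strictly coarser than the topology generated by $\Psi$.

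I expect the main obstacle to be the tangent-space computation at $\nu_\MI$ --- concretely, verifying that the classes of $\mathcal S/\!\sim$ really are pairwise non-equivalent as tangent vectors (the realization of each direction by an explicit monomial valuation, and the equality $\nu_\lambda\wedge\nu_{\lambda'}=\nu_\MI$). Everything else is a direct application of Theorems \ref{Comp} and \ref{Teo_1} together with the fact that metrizable spaces are first countable. I would also note the hypothesis that $R$ have uncountable residue field --- automatic in the motivating case $R=\C[[x,y]]$ --- since over a countable residue field $\mathcal W_\MI$ need not possess a point with uncountably many branches.
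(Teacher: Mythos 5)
Your argument is correct, and its skeleton is the same as the paper's: combine Theorem \ref{Comp} with Theorem \ref{Teo_1}, noting that a metric topology is first countable, so it suffices to produce one point of the valuative tree with uncountably many tangent vectors. Where you differ is in how that point is produced. The paper simply quotes \cite{Fav_1} for the fact that divisorial valuations have uncountably many branches, whereas you compute the branching at the root $\nu_\MI$ directly from the paper's own machinery: the Lemma before Corollary \ref{Cor_1} gives that each $\nu\neq\nu_\MI$ has at most one direction $\lambda\in\mathcal S/\!\sim\;\cong\p^1(k)$ with $\nu(x+\lambda y)>\nu(\MI)$, the unlabelled Corollary after Corollary \ref{Cor_1} (for $k$ algebraically closed) gives that it has at least one, monomial valuations realize every direction, and the two facts together force $]\nu_\MI,\nu_\lambda]\cap\,]\nu_\MI,\nu_{\lambda'}]=\emptyset$ for $\lambda\neq\lambda'$ (note that this disjointness uses the \emph{existence} of a direction for any intermediate $\mu>\nu_\MI$, not only the ``at most one'' statement, so the algebraic closedness of the residue field enters there, not just in counting $\p^1(k)$). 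What your route buys is a self-contained proof, independent of the Favre--Jonsson branching result, and it makes explicit the hypothesis the paper leaves implicit by citing \cite{Fav_1}: one needs the residue field to be uncountable (e.g.\ $R=\C[[x,y]]$), and for your particular argument algebraically closed as well; for a general two-dimensional regular local ring with countable residue field the conclusion can fail. What the paper's citation buys is brevity and the stronger geometric fact that \emph{every} divisorial valuation, not only the root, has uncountably many branches. For the injection $\p^1(k)\hookrightarrow\mathcal T_{\nu_\MI}$ that you actually need, your sketch is complete once those two verifications (realization by monomial valuations and disjointness of the half-open segments) are written out as above.
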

\begin{proof}
It is proved in \cite{Fav_1} that divisorial valuations have uncountably many branches. By the theorem above we get that the weak tree topology and the metric topology defined by a parametrization are different.
\end{proof}
\begin{Obs}
As a criterion for the topologies to be equal or different, the fact that there exists a point with uncountably many branches is the best that we can get. We can present examples of trees in which every point has finitely (or countably) many branches and the topologies are equal and examples where they are different.
\end{Obs}

\smallskip

\noindent
\bf Acknowledgement. \rm The author would like to thank to his supervisor Franz-Viktor Kuhlmann for his valuable comments and corrections.

\end{document}